\theoremstyle{plain}
\newtheorem{lemma}{Lemma}[section]
\newtheorem{theorem}[lemma]{Theorem}
\newtheorem{corollary}[lemma]{Corollary}
\newtheorem{algorithm}[lemma]{Algorithm}
\theoremstyle{definition}
\newtheorem{example}[lemma]{Example}
\newtheorem{counterexample}[lemma]{Counterexample}
\newtheorem{remark}[lemma]{Remark}
\begin{document}

\subjclass[2000]{ Primary 13D40; Secondary 13C13, 13D02.}
\keywords{Hilbert depth, Stanley depth, (squarefree) lex ideals.}

\title[ ]{How to compute the Hilbert depth of a graded ideal}
\author[ ]{Ri-Xiang Chen}
\address{Department of Mathematics, Nanjing University of Science and Technology, Nanjing, Jiangsu, 210094, P.R.China }
\email{rc429@cornell.edu }
\date{}
\begin{abstract}
We give two algorithms for computing the Hilbert depth of a \emph{graded ideal} in the polynomial ring. 
These algorithms work efficiently for (squarefree) lex ideals. As a consequence, we  construct counterexamples to some conjectures made by Shen in \cite{B:Sh2}.
\end{abstract}
\maketitle

\section{Introduction}

\noindent Let $S=K[x_1, \ldots, x_n]$ be the polynomial 
ring over a field $K$ with the standard $\mathbb{Z}^n$-grading. 
Let $I$ be a $\mathbb{Z}^n$-graded ideal in $S$, then $I$ 
is a monomial ideal. A \emph{Stanley decomposition} of $I$ 
is a direct sum
\[
\mathcal{D}: I=\bigoplus_{i=1}^{m}m_iK[Z_i]
\]
as $K$-vector spaces, where $m_i\in I$ is a monomial and 
$Z_i\subset \{x_1,\ldots,x_n\}$. We define the \emph{Stanley depth} of $\mathcal{D}$ as
\[
\mbox{sdepth}(\mathcal{D}):=\min\{|Z_i|\mid 1\leq i \leq m\},
\]
and the \emph{Stanley depth} of $I$ as
\[
\mbox{sdepth}(I):=\max\{\mbox{sdepth}(\mathcal{D})\mid \mathcal{D} \ \mbox{is a Stanley decomposition of}\  I\}.
\]
Stanley's Conjecture \cite{B:St} says that $\mbox{sdepth}(I)\geq \mbox{depth}(I)$. 
Except some special cases, this conjecture remains open.(For details, see the introduction in \cite{B:Sh2}.)

In general, it is hard to compute $\mbox{sdepth}(I)$. A breakthrough 
was made by Herzog, Vladoiu and Zheng in \cite{B:HVZ}, where the 
computation of $\mbox{sdepth}(I)$ was converted to the problem of 
patitions of the poset $P_I^g$ into intervals.(see \cite{B:HVZ} section 2 for details.) 
With this method, many results were obtained. For example,
Bir\'{o} et al. \cite{B:BHK} showed 
\[
\mbox{sdepth}(\mathfrak{m})=\lceil \frac{n}{2} \rceil,
\]
where $\mathfrak{m}=(x_1, \ldots, x_n)\subset S$; 
Keller et al. \cite{B:KSS} showed
\[
\mbox{sdepth}(I_{n,d})=d+\lfloor \frac{n-d}{d+1} \rfloor \  \ \ \mbox{for}\ 1\leq d \leq n < 5d+4,
\]
where $I_{n,d}$ is the squarefree Veronese ideal 
generated by all degree $d$ squarefree monomials in $S$.
Since the poset $P_I^g$ often contains many elements, the 
partitions of $P_I^g$ into intervals can be very complicated. 
Because of this, we still do not know if $\mbox{sdepth}(I_{n,d})=d+\lfloor \frac{n-d}{d+1} \rfloor $
holds for all $d\leq n$. 

Closely related to Stanley depth is the concept of 
Hilbert depth, which was introduced by Bruns et al. in \cite{B:BKU1}.
Now suppose that $S=K[x_1,\ldots, x_n]$ is $\mathbb{Z}$-graded 
with $\deg(x_i)=1$ for $1\leq i \leq n$.
Let $I$ be a $\mathbb{Z}$-graded ideal in $S$. 
A \emph{Hilbert decomposition} of $I$ is an isomorphism 
\[
\mathcal{H}: I\cong \bigoplus_{i=1}^{m}K[Z_i](-s_i) \tag{\dag}
\]
as $K$-vector spaces, where $s_i\in \mathbb{Z}_{\geq 0}$ 
and $Z_i\subset \{x_1,\ldots,x_n\}$. 
We define the \emph{Hilbert depth} of $\mathcal{H}$ as
\[
\mbox{hdepth}(\mathcal{H}):=\min\{|Z_i|\mid 1\leq i \leq m\},
\]
and the \emph{Hilbert depth} of $I$ as
\[
\mbox{hdepth}(I):=\max\{\mbox{hdepth}(\mathcal{H})\mid \mathcal{H} \ \mbox{is a Hilbert decomposition of}\  I\}.
\]
Note that for simplicity, $\mbox{hdepth}(I)$ in this paper 
is the same as $\mbox{Hdepth}_1(I)$ in \cite{B:BKU1} and 
$\mbox{hdepth}_1(I)$ in \cite{B:Sh2}. Also, in the rest of 
this paper, by a graded ideal, we mean a $\mathbb{Z}$-graded ideal.

For a monomial ideal $I$ in $S$, since a Stanley 
decomposition 
\[
\mathcal{D}: I=\bigoplus_{i=1}^{m}m_iK[Z_i]
\]
induces a Hilbert decomposition
\[
\mathcal{H}: I\cong \bigoplus_{i=1}^{m}K[Z_i](-\deg(m_i)),
\]
it follows that $\mbox{sdepth}(I)\leq \mbox{hdepth}(I)$.
In other words, Hilbert decomposition is weaker than
Stanley decompostion and $\mbox{hdepth}(I)$ gives an upper 
bound for $\mbox{sdepth}(I)$.

In general, $\mbox{hdepth}(I)$ is easier to compute than 
$\mbox{sdepth}(I)$, because $\mbox{hdepth}(I)$ depends only 
on the Hilbert series of $I$. A Hilbert decomposition ($\dag$)
is equivalent to a decomposition of the Hilbert series $H_I(t)$:
\[
H_I(t)=\sum_{i=1}^m\frac{t^{s_i}}{(1-t)^{|Z_i|}}.
\]
Actually, we have the following key theorem about $\mbox{hdepth}(I)$:

\begin{theorem}[Uliczka \cite{B:Ul}] \label{T:Ul}
Let $I$ be a graded ideal in the polynomial ring 
$S=K[x_1,\ldots, x_n]$, then $\mbox{hdepth}(I)$ is the same as:
\begin{itemize}
\item[(1)] the maximal $p$ such that all the coefficients in
           the power series $(1-t)^pH_I(t)$ are non-negative;
\item[(2)] the maximal $p$ such that $H_I(t)$ can be written as 
           \[
           H_I(t)=\sum_{i=p}^n\frac{Q_i(t)}{(1-t)^i},
           \]
           where $Q_i(t)\in \mathbb{Z}_{\geq 0}[t]$.
\end{itemize}
\end{theorem}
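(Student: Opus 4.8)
The plan is to show that the three quantities coincide: $\mbox{hdepth}(I)$, the number $p_1$ appearing in part (1), and the number $p_2$ appearing in part (2). First I would observe that a Hilbert decomposition is determined by its Hilbert series alone, since graded $K$-vector spaces with finite-dimensional homogeneous components are isomorphic precisely when their Hilbert series agree. Thus giving a decomposition $(\dag)$ amounts to writing $H_I(t)=\sum_{i=1}^m t^{s_i}/(1-t)^{|Z_i|}$, and collecting the summands with $|Z_i|=j$ into a polynomial $Q_j(t)\in\mathbb{Z}_{\ge0}[t]$ yields $H_I(t)=\sum_{j=0}^n Q_j(t)/(1-t)^j$ with $\mbox{hdepth}(\mathcal{H})=\min\{j: Q_j\ne0\}$; conversely every such grouped expression comes from a decomposition. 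Hence $\mbox{hdepth}(I)\ge p$ if and only if $H_I(t)$ can be written as in (2) with lower index $p$, which is exactly the statement $\mbox{hdepth}(I)=p_2$. (A decomposition exists, e.g.\ from any Stanley decomposition, so this maximum is taken over a nonempty set.)

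The inequality $p_2\le p_1$ is the easy direction. Given a representation as in (2) with exponent $p$, I would multiply by $(1-t)^p$ to get $(1-t)^pH_I(t)=\sum_{i=p}^n Q_i(t)/(1-t)^{i-p}$. Each factor $1/(1-t)^{i-p}$ has nonnegative coefficients because $i\ge p$, and $Q_i(t)\in\mathbb{Z}_{\ge0}[t]$, so the entire series has nonnegative coefficients; thus $p\le p_1$, and therefore $p_2\le p_1$.

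The reverse inequality $p_1\le p_2$ is the crux. Writing $H_I(t)=K(t)/(1-t)^n$ with $K(t)\in\mathbb{Z}[t]$ the numerator polynomial, and setting $g(t)=(1-t)^{p_1}H_I(t)=K(t)/(1-t)^{q}$ with $q=n-p_1$, the hypothesis of part (1) says that $g(t)=\sum_k b_k t^k$ has all $b_k\ge0$. I would reduce everything to the following lemma: if a power series $g(t)=\sum_k b_k t^k$ with $b_k\ge0$ equals $K(t)/(1-t)^q$ for some polynomial $K(t)$, then $g(t)=\sum_{j=0}^q Q_j(t)/(1-t)^j$ with $Q_j(t)\in\mathbb{Z}_{\ge0}[t]$. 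Dividing the resulting expression by $(1-t)^{p_1}$ then puts $H_I(t)$ into the form (2) with exponent $p_1$, giving $p_1\le p_2$ and finishing the proof.

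I expect this lemma to be the main obstacle, and I would prove it by induction on $q$, peeling off the top denominator. The base case $q=0$ is immediate, since $g$ is then a polynomial with nonnegative coefficients. For the inductive step the difficulty is controlling signs while lowering the power of $(1-t)$: the first-difference sequence $b_k-b_{k-1}$, which is the coefficient sequence of $(1-t)g(t)=K(t)/(1-t)^{q-1}$, need not be nonnegative. I would resolve this using the asymptotics forced by nonnegativity. For large $k$ the coefficient $b_k$ agrees with a polynomial of degree $q-1$ and leading coefficient $K(1)/(q-1)!$, so $b_k\ge0$ forces $K(1)\ge0$; when $K(1)=0$ the factor $(1-t)$ divides $K$ and the denominator drops, reducing $q$ directly, while when $K(1)>0$ the sequence $b_k$ is eventually nondecreasing. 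In the latter case I would pick $N$ beyond which $b_k$ is nondecreasing and split $g=g_{<N}+g_{\ge N}$, where $g_{<N}=\sum_{k<N}b_k t^k$ is a polynomial with nonnegative coefficients (absorbed into $Q_0$) and $g_{\ge N}$ has a nondecreasing, nonnegative coefficient sequence. Then $(1-t)g_{\ge N}$ has nonnegative coefficients and equals a polynomial divided by $(1-t)^{q-1}$, so the induction hypothesis applies; multiplying the resulting decomposition back by $1/(1-t)$ raises each denominator by one and yields the claim. The heart of the argument is precisely this monotonicity-and-sign bookkeeping in the inductive step.
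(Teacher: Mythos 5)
The paper never proves this theorem: it is imported verbatim from Uliczka \cite{B:Ul} as a known result, so there is no internal proof to compare against. What you have written is therefore a self-contained reconstruction of Uliczka's theorem, and it is essentially correct. Your three-step structure is sound: the identification $\mbox{hdepth}(I)=p_2$ via the fact that graded $K$-vector spaces with finite-dimensional components are isomorphic iff their Hilbert series agree; the easy inequality $p_2\le p_1$ by multiplying through by $(1-t)^p$; and the key lemma proved by induction on $q$, where the genuine difficulty (first differences $b_k-b_{k-1}$ need not be nonnegative) is handled by splitting off the polynomial head $g_{<N}$ and using that the coefficients eventually agree with a Hilbert polynomial of degree $q-1$ and leading coefficient $K(1)/(q-1)!$, forcing $K(1)\ge 0$ and eventual monotonicity when $K(1)>0$. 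This is the same style of numerator manipulation the paper itself later exploits (Theorem \ref{T:key}(3), Corollary \ref{C:key2}), but there the paper \emph{uses} Uliczka's theorem as input, whereas you derive it.

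Two small points deserve tightening. First, your parenthetical that a Hilbert decomposition ``exists, e.g.\ from any Stanley decomposition'' is loose for a general graded ideal $I$, since the paper defines Stanley decompositions only for monomial ideals; you need to pass to $\mbox{in}_<(I)$ or to the lex ideal of Macaulay's theorem first --- or, more elegantly, note that your lemma applied to $H_I(t)=K(t)/(1-t)^n$ with $q=n$ (the coefficients of $H_I$ are dimensions, hence nonnegative) already produces a decomposition, making the existence claim self-contained. Second, you implicitly assume the maximum in (1) is finite; this needs a one-line argument (for $p>n$ one has $(1-t)^pH_I(t)=K(t)(1-t)^{p-n}$, which for $p$ large has a negative coefficient unless $K=0$), together with the observation that the set of valid $p$ in (1) is downward closed since $1/(1-t)$ has nonnegative coefficients. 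Neither gap affects the substance of the argument.
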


With this tool, Bruns et al. \cite{B:BKU2} showed 
\[
\mbox{hdepth}(\mathfrak{m}^d)=\lceil \frac{n}{d+1} \rceil,
\]
where $\mathfrak{m}=(x_1,\ldots, x_n)\subset S$ and $d\geq 1$;
Ge et al. \cite{B:GLW} showed
\[
\mbox{hdepth}(I_{n,d})=d+\lfloor \frac{n-d}{d+1} \rfloor,
\]
where $I_{n,d}$ is the squarefree Veronese ideal. As pointed out 
in section 4 of \cite{B:Sh2}, these two results are equivalent, 
since $H_{\mathfrak{m}^d}(t)=(1-t)^{d-1}H_{I_{n+d-1,d}}(t)$.
By comparing Hilbert depth and Stanley depth, it is natural 
to ask if $\mbox{sdepth}(\mathfrak{m}^d)=\lceil \frac{n}{d+1} \rceil$ holds for $d\geq 2$.
Little is known about this.

Hilbert depth can help us understand Stanley depth. Conversely,
results about Stanley depth can shed some light on Hilbert depth too.
Section 2 in this paper is actually inspired by section 2 in \cite{B:HVZ}. 
Theorem \ref{T:key}  here about Hilbert depth is analogous to 
Theorem 2.1 and Theorem 2.4 in \cite{B:HVZ}, which 
are about Stanley depth. By Theorem \ref{T:key}, we will develop two algorithms 
(Algorithm \ref{A:algorithm1} and Algorithm \ref{A:algorithm2}) 
for computing the Hilbert depth of a graded ideal. And 
many interesting examples are computed in section 2. 

In \cite{B:Sh2}, some conjectures were made about the 
Hilbert depth and Stanley depth of a lex ideal generated by 
monomials of the same degree. In Section 3 of this paper we 
will give some counterexamples to these conjectures. 
And Algorithm \ref{A:algorithm2} will be used in the computations 
of these counterexamples. 

The algorithms in this paper work only for \emph{graded ideals} in
the polynomial ring. They are different form the algorithm given by Popescu
in \cite{B:Po} and the algorithm given by Bruns et al. in \cite{B:BMU}. 
Their algorithms work for \emph{all modules} over the polynomial ring. 
The differences among these algorithms will be illustrated in 
Examples \ref{E:ex2},  \ref{E:ex3}, \ref{E:r1}, \ref{E:r2} and Remark \ref{R:comparison}.

The author was originally interested in finding some counterexamples to 
the conjectures in \cite{B:Sh2} and in using the Hilbert depth to help study 
the Stanley depth of squarefree Veronese ideals. After computing 
many examples, Algorithm \ref{A:algorithm2} was first developed. 
In order to give the algorithm a proof, Theorem \ref{T:key} and Algorithm 
\ref{A:algorithm1} were then found. It turns out that these algorithms
are useful, especially for computing the Hilbert depth of (squarefree) lex ideals.

\section{Algorithms for Computing the Hilbert Depth}
\noindent This section is inspired by section 2 in \cite{B:HVZ}. In the following 
theorem, $f(t)$ is analogous to $P_I^g$; a decomposition of $f(t)$ as in ($\dag\dag$)
is analogous to a partition of $P_I^g$ into intervals; 
part (2) is analogous to Theorem 2.1 in \cite{B:HVZ}; 
part (3) is analogous to Theorem 2.4 in \cite{B:HVZ}. 

\begin{theorem}\label{T:key}
Let $I$ be a proper squarefree monomial ideal in  $S=K[x_1,\ldots, x_n]$.
Let $I$ be minimally generated by monomials $u_1, \ldots, u_s$ with
$1\leq d=\deg(u_1)\leq \cdots \leq \deg(u_s)$. For all $ d\leq i \leq n$, 
let $a_i$ be the number of squarefree monomials of degree $i$ in $I$.
Set
\[
f(t):=a_dt^d+\cdots+a_nt^n=\sum_{i=d}^na_it^i.
\]
Then
\begin{itemize}
\item[(1)] $H_I(t)=\frac{a_dt^d}{(1-t)^d}+\cdots+\frac{a_nt^n}{(1-t)^n}=\displaystyle \sum_{i=d}^n\frac{a_it^i}{(1-t)^i}$ and $\mbox{hdepth}(I)\geq d$;
\item[(2)] if 
           \[
           f(t)=\sum_jb_jt^{\alpha_j}(1+t)^{\beta_j}, \  \mbox{with} \ b_j\in \mathbb{Z}_{>0}, \alpha_j\geq d, \beta_j\in \mathbb{Z}_{\geq 0}, \tag{\dag\dag}
           \]
           then
           \[
           H_I(t)=\sum_j\frac{b_jt^{\alpha_j}}{(1-t)^{\alpha_j+\beta_j}} \ \mbox{and} \ \mbox{hdepth}(I)\geq \min_j\{\alpha_j+\beta_j\};
           \]
\item[(3)] if $\mbox{hdepth}(I)\geq p$ with $d\leq p\leq n$, then
           there exist unique $b_d,\ldots, b_p\in \mathbb{Z}_{\geq 0}$ such that
           \[
           f(t)=\left( \sum_{i=d}^pb_it^i(1+t)^{p-i} \right) +a_{p+1}t^{p+1}+\cdots+a_nt^n,
           \]         
           and then by part (2),
           \[
           H_I(t)=\frac{b_dt^d+b_{d+1}t^{d+1}+\cdots+b_pt^p}{(1-t)^p}+\frac{a_{p+1}t^{p+1}}{(1-t)^{p+1}}+\cdots+\frac{a_nt^n}{(1-t)^n}.
           \]           
\end{itemize}
\end{theorem}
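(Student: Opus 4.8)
\section*{Proof proposal}

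The plan is to reduce all three parts to a single substitution identity, namely $H_I(t) = f\bigl(\tfrac{t}{1-t}\bigr)$, and to combine it with Uliczka's Theorem \ref{T:Ul}. First I would establish part (1). The key observation is that since $I$ is squarefree, a monomial $u$ lies in $I$ if and only if the squarefree monomial $\prod_{x_j \mid u} x_j$ with the same support lies in $I$; indeed a squarefree generator divides $u$ exactly when it divides that support monomial. Grouping the monomials of $I$ according to their support, each squarefree $v \in I$ with $|\mbox{supp}(v)| = i$ accounts for the monomials supported exactly on $\mbox{supp}(v)$, whose generating function is $\bigl(\tfrac{t}{1-t}\bigr)^i = \tfrac{t^i}{(1-t)^i}$. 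Summing over the $a_i$ squarefree monomials of each degree $i$ yields the asserted formula for $H_I(t)$, and in particular $H_I(t) = f\bigl(\tfrac{t}{1-t}\bigr)$. The bound $\mbox{hdepth}(I) \ge d$ then follows from Theorem \ref{T:Ul}(1), since $(1-t)^d H_I(t) = \sum_{i=d}^n a_i \tfrac{t^i}{(1-t)^{i-d}}$ has only non-negative coefficients.

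For part (2), I would substitute $t \mapsto \tfrac{t}{1-t}$ into the hypothesized decomposition of $f$. The one identity that does all the work is $1 + \tfrac{t}{1-t} = \tfrac{1}{1-t}$, which converts each factor $(1+t)^{\beta_j}$ into $(1-t)^{-\beta_j}$; combined with $H_I(t) = f\bigl(\tfrac{t}{1-t}\bigr)$ this gives $H_I(t) = \sum_j \tfrac{b_j t^{\alpha_j}}{(1-t)^{\alpha_j+\beta_j}}$. To read off a genuine Hilbert decomposition I must check that each $\alpha_j + \beta_j \le n$: since every $b_j > 0$ and every $t^{\alpha_j}(1+t)^{\beta_j}$ has non-negative coefficients with leading term in degree $\alpha_j + \beta_j$, no top-degree cancellation occurs, so $\max_j(\alpha_j + \beta_j) = \deg f \le n$. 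Each summand then contributes $b_j$ copies of $K[Z](-\alpha_j)$ with $|Z| = \alpha_j + \beta_j$, whence $\mbox{hdepth}(I) \ge \min_j\{\alpha_j + \beta_j\}$.

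Part (3) carries the real content. Because $\sum_{i=p+1}^n a_i t^i$ already matches $f$ in degrees above $p$, while each $t^i(1+t)^{p-i}$ with $d \le i \le p$ has degree exactly $p$, determining the $b_i$ reduces to solving, for $d \le k \le p$, the equations $\sum_{i=d}^k b_i \binom{p-i}{k-i} = a_k$. This system is lower triangular with unit diagonal (the coefficient of $b_k$ is $\binom{p-k}{0}=1$), so it has a unique integer solution $b_d, \dots, b_p$, settling existence and uniqueness. The hard part is showing these integers are non-negative, and this is the only place the hypothesis $\mbox{hdepth}(I) \ge p$ is used. Substituting $t \mapsto \tfrac{t}{1-t}$ into the decomposition just found yields $(1-t)^p H_I(t) = \sum_{i=d}^p b_i t^i + \sum_{i=p+1}^n \tfrac{a_i t^i}{(1-t)^{i-p}}$; since the second sum is supported in degrees $\ge p+1$, the coefficient of $t^k$ in $(1-t)^p H_I(t)$ equals $b_k$ for every $d \le k \le p$. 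By Theorem \ref{T:Ul}(1) the hypothesis forces all these coefficients to be non-negative, so $b_k \ge 0$. Finally, the stated formula for $H_I(t)$ follows by applying part (2) to this decomposition of $f$, discarding any terms with $b_i = 0$.
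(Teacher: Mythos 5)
Your proof is correct, and while it follows the paper's overall strategy (part (1) via decomposing $I$ into pieces indexed by the squarefree monomials, parts (2) and (3) via Theorem \ref{T:Ul}), the mechanism is genuinely different and leaner. The paper's part (1) is the same decomposition as yours---the trivial partition $P_I^g=\bigcup_u[u,u]$ of \cite{B:HVZ} groups the monomials of $I$ by support exactly as your counting argument does---but from there the paper works harder: it proves part (2) by expanding $t^{\alpha_j}(1+t)^{\beta_j}$ binomially and re-collapsing the resulting sum of terms $\binom{\beta_j}{k}t^{\alpha_j+k}/(1-t)^{\alpha_j+k}$ via $(t+(1-t))^{\beta_j}=1$, and in part (3) it \emph{defines} $b_i$ by the explicit alternating sum (*) (the coefficients of $(1-t)^pH_I(t)$, non-negative by Uliczka) and then must verify the identity (**), namely $\sum_i b_it^i(1+t)^{p-i}=\sum_i a_it^i$, by showing that a product $AB$ of two binomial-coefficient matrices is the identity. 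Your single substitution identity $H_I(t)=f\bigl(\tfrac{t}{1-t}\bigr)$, with $1+\tfrac{t}{1-t}=\tfrac{1}{1-t}$, does all of that work at once: it gives part (2) in one line, and in part (3) it lets you reverse the logical order---first get existence and uniqueness of integer $b_i$ from the lower-triangular unit-diagonal system, then read off $b_k$ as the coefficient of $t^k$ in $(1-t)^pH_I(t)$ and invoke Theorem \ref{T:Ul}(1) for non-negativity---so the matrix computation disappears entirely. What the paper's route buys is the closed formula (*) for the $b_i$; what yours buys is brevity and a transparent uniqueness argument. One step you should make explicit: your claim that $\mbox{hdepth}(I)\geq p$ forces $(1-t)^pH_I(t)$ to have non-negative coefficients does not follow from the bare statement of Theorem \ref{T:Ul}(1) without the monotonicity observation $(1-t)^pH_I(t)=\left((1-t)^{\mbox{hdepth}(I)}H_I(t)\right)\cdot\frac{1}{(1-t)^{\mbox{hdepth}(I)-p}}$, a product of two series with non-negative coefficients; the paper devotes the opening lines of its part (3) to exactly this point. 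Conversely, your verification in part (2) that $\max_j(\alpha_j+\beta_j)=\deg f\leq n$ (so the series decomposition really is realizable as a Hilbert decomposition inside $S$) is a detail the paper glosses over.
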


\begin{proof}
(1)Let $g=(1,\ldots, 1)$, then by the method of \cite{B:HVZ}, the trivial
partition of $P_I^g$, 
\[
P_I^g=\bigcup_{u\in P_I^g}[u,u],
\]
induces a Stanley decomposition of $I$. Suppose $\deg(u)=l$ and $u=x_{i_1}\cdots x_{i_l}$, then
\[
I=\bigoplus_{u\in P_I^g} uK[x_{i_1},\ldots, x_{i_l}].
\]
Thus, the multigraded Hilbert series of $I$ is 
\[
H_I(T_1,\ldots,T_n)=\sum_{u\in P_I^g}\frac{T_{i_1}\cdots T_{i_l}}{(1-T_{i_1})\cdots(1-T_{i_l})}.
\]
So, the Hilbert series of $I$ is
\[
H_I(t)=\sum_{u\in P_I^g}\frac{t^{\deg(u)}}{(1-t)^{\deg(u)}}=\sum_{i=d}^n\frac{a_it^i}{(1-t)^i}.
\]

(2)From the proof of part (1), we see that a $t^i$ in $f(t)$ corresponds to a squarefree 
monomial of degree $i$ in $I$, which gives rise to a term $\frac{t^i}{(1-t)^i}$ in $H_I(t)$.
Thus, 
\[
t^{\alpha_i}(1+t)^{\beta_i}=\binom{\beta_j}{0}t^{\alpha_j}+\binom{\beta_j}{1}t^{\alpha_j+1}+\cdots+\binom{\beta_j}{\beta_j}t^{\alpha_j+\beta_j}
\]
gives rise to 
\[
\binom{\beta_j}{0}\frac{t^{\alpha_j}}{(1-t)^{\alpha_j}}+\binom{\beta_j}{1}\frac{t^{\alpha_j+1}}{(1-t)^{\alpha_j+1}}
+\cdots+\binom{\beta_j}{\beta_j}\frac{t^{\alpha_j+\beta_j}}{(1-t)^{\alpha_j+\beta_j}}
\]
in $H_I(t)$. The latter is equal to 
\begin{align*}
  & \frac{\binom{\beta_j}{0}t^{\alpha_j}(1-t)^{\beta_j}+\binom{\beta_j}{1}t^{\alpha_j+1}(1-t)^{\beta_j-1}
  +\cdots+\binom{\beta_j}{\beta_j}t^{\alpha_j+\beta_j}(1-t)^{0}}{(1-t)^{\alpha_j+\beta_j}}\\
  &= \frac{t^{\alpha_j}\left(\binom{\beta_j}{0}(1-t)^{\beta_j}+\binom{\beta_j}{1}t^{1}(1-t)^{\beta_j-1}
  +\cdots+\binom{\beta_j}{\beta_j}t^{\beta_j}(1-t)^{0}\right)}{(1-t)^{\alpha_j+\beta_j}}\\
  &=\frac{t^{\alpha_j}\left(t+(1-t)\right)^{\beta_j}}{(1-t)^{\alpha_j+\beta_j}}\\
  &=\frac{t^{\alpha_j}}{(1-t)^{\alpha_j+\beta_j}}.
\end{align*}
Therefore, if 
\[
           f(t)=\sum_jb_jt^{\alpha_j}(1+t)^{\beta_j}, 
\]
then
\[
H_I(t)=\sum_j\frac{b_jt^{\alpha_j}}{(1-t)^{\alpha_j+\beta_j}}.
\]
And by Theorem \ref{T:Ul}, we have 
\[
\mbox{hdepth}(I)\geq \min_j\{\alpha_j+\beta_j\}.
\]

(3)Note that
\[
(1-t)^pH_I(t)=\left((1-t)^{\mbox{hdepth}(I)}H_I(t)\right)\cdot\frac{1}{(1-t)^{\mbox{hdepth}(I)-p}}.
\]
By Theorem \ref{T:Ul}, all the coefficients in the power series $(1-t)^{\mbox{hdepth}(I)}H_I(t)$ are non-negative.
By the assumption, $\mbox{hdepth}(I)\geq p$, it follows that all 
the coefficients in the power series $\frac{1}{(1-t)^{\mbox{hdepth}(I)-p}}$ are non-negative.
Thus, all the coefficients in the power series $(1-t)^pH_I(t)$ are non-negative.

By part (1), we have
\begin{align*}
  & (1-t)^pH_I(t)\\
  &= (1-t)^p\left(\frac{a_dt^d}{(1-t)^d}+\frac{a_{d+1}t^{d+1}}{(1-t)^{d+1}}+\cdots+\frac{a_nt^n}{(1-t)^n}\right)\\
  &=a_dt^d(1-t)^{p-d}+a_{d+1}t^{d+1}(1-t)^{p-d-1}+\cdots+a_{p-1}t^{p-1}(1-t)^1+a_pt^p\\
  &\ \ +\frac{a_{p+1}t^{p+1}}{(1-t)^{p+1}}+\cdots+\frac{a_nt^n}{(1-t)^n}.
\end{align*}
Since in the above expression all the terms in the first $p-d+1$ parts are of degree $\leq p$
and all the terms in the last $n-p$ parts are of degree $\geq p+1$, it follows that all the coefficients 
in the degree $p$ polynomail 
\[
a_dt^d(1-t)^{p-d}+a_{d+1}t^{d+1}(1-t)^{p-d-1}+\cdots+a_{p-1}t^{p-1}(1-t)^1+a_pt^p
\]
are non-negative. $\forall \ d\leq i \leq p$, the coefficient of $t^i$ in the above polynomial is 
\[
a_i-\binom{p-(i-1)}{1}a_{i-1}+\binom{p-(i-2)}{2}a_{i-2}-\cdots+(-1)^{i-d}\binom{p-d}{i-d}a_d.
\]
$\forall \ d\leq i \leq p$, set
\[
b_i:=a_i-\binom{p-(i-1)}{1}a_{i-1}+\binom{p-(i-2)}{2}a_{i-2}-\cdots+(-1)^{i-d}\binom{p-d}{i-d}a_d, \tag{*}
\]
then $b_d,\ldots, b_p\in \mathbb{Z}_{\geq 0}$.

Claim: with $b_d, \ldots, b_p$ defined as above, the following identity holds:
\[
b_dt^d(1+t)^{p-d}+b_{d+1}t^{d+1}(1+t)^{p-d-1}+\cdots+b_pt^p=a_dt^d+a_{d+1}t^{d+1}+\cdots+a_pt^p. \tag{**}
\]

Indeed, suppose
\[
b_dt^d(1+t)^{p-d}+b_{d+1}t^{d+1}(1+t)^{p-d-1}+\cdots+b_pt^p=c_dt^d+c_{d+1}t^{d+1}+\cdots+c_pt^p.
\]
Let
\begin{align*}
A&= 
  \begin{pmatrix} \binom{p-d}{0} & 0 & 0& \cdots & 0\\ \binom{p-d}{1} & \binom{p-d-1}{0} & 0& \cdots & 0\\ \binom{p-d}{2} & \binom{p-d-1}{1} & \binom{p-d-2}{0}& \cdots & 0\\ \vdots & \vdots & \vdots & \ddots & \vdots \\\binom{p-d}{p-d} & \binom{p-d-1}{p-d-1} & \binom{p-d-2}{p-d-2}& \cdots & 1\end{pmatrix}, \\ 
B&=\begin{pmatrix} \binom{p-d}{0} & 0 & 0& \cdots & 0\\ -\binom{p-d}{1} & \binom{p-d-1}{0} & 0& \cdots & 0\\ \binom{p-d}{2} & -\binom{p-d-1}{1} & \binom{p-d-2}{0}& \cdots & 0\\ \vdots & \vdots & \vdots & \ddots & \vdots \\(-1)^{p-d}\binom{p-d}{p-d} & (-1)^{p-d-1}\binom{p-d-1}{p-d-1} & (-1)^{p-d-2}\binom{p-d-2}{p-d-2}& \cdots & 1\end{pmatrix} .
\end{align*}
Then we have
\[
  \begin{pmatrix} c_d\\ c_{d+1}\\ c_{d+2}\\ \vdots \\c_p \end{pmatrix}= 
A\begin{pmatrix} b_d\\ b_{d+1}\\ b_{d+2}\\ \vdots \\b_p \end{pmatrix}=
AB \begin{pmatrix} a_d\\ a_{d+1}\\ a_{d+2}\\ \vdots \\a_p \end{pmatrix}.
\]
Let $1\leq j<i \leq p-d+1$, then the $i$-th row of the matrix $A$ is 
\[
\begin{pmatrix} \binom{p-d}{i-1} & \binom{p-d-1}{i-2}  & \cdots &\binom{p-d-i+1}{0} & 0 & \cdots & 0 \end{pmatrix},
\]
and the transpose of the $j$-th column of the matrix $B$ is 
\[
\begin{pmatrix} 0& \cdots & 0 & \binom{p-d-j+1}{0} & -\binom{p-d-j+1}{1} & \cdots & (-1)^{p-d-j+1}\binom{p-d-j+1}{p-d-j+1}\end{pmatrix}.
\]
Their inner product is 
\begin{align*}
&\begin{smallmatrix} \binom{p-d-j+1}{0}\binom{p-d-j+1}{i-j}- \binom{p-d-j+1}{1}\binom{p-d-j}{i-j-1}
+\binom{p-d-j+1}{2}\binom{p-d-j-1}{i-j-2}-\cdots +(-1)^{i-j}\binom{p-d-j+1}{i-j}\binom{p-d-i+1}{0} \end{smallmatrix}\\ 
&=\begin{matrix} \frac{(p-d-j+1)!}{(p-d-i+1)!}\left( \frac{1}{0!(i-j)!}-\frac{1}{1!(i-j-1)!}+\frac{1}{2!(i-j-2)!}-\cdots +(-1)^{i-j}\frac{1}{(i-j)!0!}\right) \end{matrix}\\
&=\begin{matrix} \frac{(p-d-j+1)!}{(p-d-i+1)!(i-j)!}\left( \frac{(i-j)!}{0!(i-j)!}-\frac{(i-j)!}{1!(i-j-1)!}+\frac{(i-j)!}{2!(i-j-2)!}-\cdots +(-1)^{i-j}\frac{(i-j)!}{(i-j)!0!}\right) \end{matrix}\\
&=\begin{matrix} \frac{(p-d-j+1)!}{(p-d-i+1)!(i-j)!}\left( \binom{i-j}{0}-\binom{i-j}{1}+\binom{i-j}{2}-\cdots +(-1)^{i-j}\binom{i-j}{i-j}\right) \end{matrix}\\
&=\begin{matrix} \frac{(p-d-j+1)!}{(p-d-i+1)!(i-j)!}(1-1)^{i-j} \end{matrix}\\
&=0.
\end{align*}
Since $AB$ is also a lower triangular matrix with  diagonal elements equal to $1$, 
it follows that $AB$ is the identity matrix. Therefore, 
$c_d=a_d, \ldots, c_p=a_p$ and the claim is proved. 

So we have found  $b_d,\ldots, b_p\in \mathbb{Z}_{\geq 0}$ such that
\[
f(t)=\left( \sum_{i=d}^pb_it^i(1+t)^{p-i} \right) +a_{p+1}t^{p+1}+\cdots+a_nt^n.
\]
From the proof of the claim, we see that $b_d,\ldots, b_p$ are 
uniquely determined by $a_d,\ldots, a_p$ because the matrix $B$ is invertible. 
\end{proof}
 
Note that the identity (**) in the above proof holds even if 
$a_d, \ldots, a_p, b_d, \ldots, b_p$ are real numbers. 
Instead of using the formula $(*)$, we can easily
compute $b_d, \ldots, b_p$ form $a_d, \ldots, a_p$ by 
the following simple algorithm. 

\begin{algorithm}\label{A:d1}
Let $g(t)=a_dt^d+\cdots+a_pt^p$ be a polynomial in $t$ with $a_d,\ldots, a_p\in \mathbb{R}$. 
\noindent Input: $g(t)=a_dt^d+\cdots+a_pt^p$.

\noindent Step $1$: Let $b_d=a_d$ and let $g_1(t)=g(t)-b_dt^d(1+t)^{p-d}$. Simplify $g_1(t)$ to 
                  get $g_1(t)=a_{d+1}^{(1)}t^{d+1}+\cdots+a_{p}^{(1)}t^p$ with $a_{d+1}^{(1)},\ldots, a_p^{(1)}\in \mathbb{R}$.
                  
\noindent Step $2$: Let $b_{d+1}=a_{d+1}^{(1)}$ and let $g_2(t)=g_1(t)-b_{d+1}t^{d+1}(1+t)^{p-d-1}$. Simplify $g_2(t)$ to 
                  get $g_2(t)=a_{d+2}^{(2)}t^{d+2}+\cdots+a_{p}^{(2)}t^p$ with $a_{d+2}^{(2)},\ldots, a_p^{(2)}\in \mathbb{R}$.   

\noindent \ldots \ldots \ldots 

\noindent Step $(p-d)$: Let $b_{p-1}=a_{p-1}^{(p-d-1)}$ and let $g_{p-d}(t)=g_{p-d-1}(t)-b_{p-1}t^{p-1}(1+t)$. Simplify $g_{p-d}(t)$ to 
                  get $g_{p-d}(t)=a_{p}^{(p-d)}t^p$ with $a_p^{(p-d)}\in \mathbb{R}$.     
                  
\noindent Step $(p-d+1)$: Let $b_p=a_{p}^{(p-d)}$.

\noindent Output: $b_d,\ldots, b_p\in \mathbb{R}$.                           
\end{algorithm}

By Theorem \ref{T:key}, we immediately have the following corollary, 
which follows the style of Theorem \ref{T:Ul}.

\begin{corollary}\label{C:key}
Let $I$ be a proper squarefree monomial ideal in $S$.
With the notations as in Theorem \ref{T:key}, we have that 
$\mbox{hdepth}(I)$ is equal to the maximal $p$ such that 
\[
f(t)=\left( \sum_{i=d}^pb_it^i(1+t)^{p-i} \right)+a_{p+1}t^{p+1}+\cdots+a_nt^n,
\]
with $b_d,\ldots, b_p\in \mathbb{Z}_{\geq 0}$. In particular, 
$\mbox{hdepth}(I)$ can be calculated in a finite number of steps. 
\end{corollary}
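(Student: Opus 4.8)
The plan is to read Corollary \ref{C:key} as an immediate repackaging of Theorem \ref{T:key}, parts (2) and (3), into the ``maximal $p$'' language of Theorem \ref{T:Ul}. The strategy is to show two inequalities: that the maximal $p$ admitting the stated decomposition is at most $\mbox{hdepth}(I)$, and that it is at least $\mbox{hdepth}(I)$. Write $p^{\ast}$ for the maximal $p$ (with $d\le p\le n$) for which $f(t)$ can be written in the form
\[
f(t)=\left( \sum_{i=d}^pb_it^i(1+t)^{p-i} \right)+a_{p+1}t^{p+1}+\cdots+a_nt^n, \quad b_d,\ldots,b_p\in\mathbb{Z}_{\geq 0}.
\]
The goal is to prove $p^{\ast}=\mbox{hdepth}(I)$.

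For the inequality $p^{\ast}\le\mbox{hdepth}(I)$, I would invoke Theorem \ref{T:key}(2) directly. Given a decomposition of $f(t)$ of the above form realizing $p=p^{\ast}$, note it is exactly an instance of ($\dag\dag$): the summands $b_it^i(1+t)^{p-i}$ contribute exponents $\alpha_j=i$ with $\beta_j=p-i$, so $\alpha_j+\beta_j=p$, while each surviving tail term $a_kt^k$ (for $p+1\le k\le n$) has $\alpha_j=k$, $\beta_j=0$, so $\alpha_j+\beta_j=k\ge p+1$. Hence $\min_j\{\alpha_j+\beta_j\}=p^{\ast}$, provided some $b_i$ with $d\le i\le p^{\ast}$ is nonzero; and indeed $b_d=a_d>0$ since $a_d$ counts the minimal generators of lowest degree, so the minimum is genuinely attained at $p^{\ast}$. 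Theorem \ref{T:key}(2) then yields $\mbox{hdepth}(I)\ge p^{\ast}$.

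For the reverse inequality $\mbox{hdepth}(I)\le p^{\ast}$, set $p=\mbox{hdepth}(I)$; by part (1) we have $d\le\mbox{hdepth}(I)$, and since $I$ is a proper ideal in $n$ variables $\mbox{hdepth}(I)\le n$, so $p$ lies in the admissible range $d\le p\le n$. Now Theorem \ref{T:key}(3) applies verbatim to this $p$: it guarantees the existence of $b_d,\ldots,b_p\in\mathbb{Z}_{\geq 0}$ making $f(t)$ decompose in precisely the required shape. Therefore $p=\mbox{hdepth}(I)$ is itself a valid value of $p$ in the definition of $p^{\ast}$, whence $p^{\ast}\ge\mbox{hdepth}(I)$. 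Combining the two inequalities gives $p^{\ast}=\mbox{hdepth}(I)$, which is the equality asserted.

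The final sentence, that $\mbox{hdepth}(I)$ can be computed in finitely many steps, then follows because $p$ ranges over the finite set $\{d,d+1,\ldots,n\}$, and for each candidate $p$ the coefficients $b_d,\ldots,b_p$ are determined by the explicit formula ($\ast$) (equivalently by Algorithm \ref{A:d1}), so testing whether they are all non-negative integers is a finite computation; one simply selects the largest $p$ that passes the test. I do not expect a serious obstacle here, since the content is entirely furnished by Theorem \ref{T:key}: the only point requiring a word of care is confirming that the minimum in part (2) is actually achieved at $p^{\ast}$ rather than at some tail term, which is where the observation $a_d>0$ (forcing $b_d>0$) does the work.
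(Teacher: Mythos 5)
Your proposal is correct and matches the paper's intent exactly: the paper states this corollary as an immediate consequence of Theorem \ref{T:key}, with part (2) giving $\mbox{hdepth}(I)\geq p^{\ast}$ and part (3) giving $p^{\ast}\geq \mbox{hdepth}(I)$, which is precisely your two-inequality argument. Your extra care about the minimum in ($\dag\dag$) being attained at $p^{\ast}$ is harmless but unnecessary, since the inequality $\mbox{hdepth}(I)\geq \min_j\{\alpha_j+\beta_j\}\geq p^{\ast}$ only requires that every term satisfy $\alpha_j+\beta_j\geq p^{\ast}$, not that equality hold for some term.
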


By the above results, when $I$ is a squarefree monomial ideal, 
we have the following algorithm to compute $\mbox{hdepth}(I)$.

\begin{algorithm}\label{A:squarefree}
Let $I$ be a proper squarefree monomial ideal in  $S=K[x_1,\ldots, x_n]$.
Let $I$ be minimallly generated by monomials $u_1, \ldots, u_s$ with
$1\leq d=\deg(u_1)\leq \cdots \leq \deg(u_s)$. 

\noindent Input: $I$.

\noindent Step 1: 
$\forall\  d\leq i \leq n$, 
count the number of squarefree monomials of degree $i$ in $I$ and denote it by $a_i$.
Set 
\[
f(t):=a_dt^d+\cdots+a_nt^n, \ \mbox{and} \ j:=\min\{\lfloor  \frac{a_{d+1}}{a_d} \rfloor, n-d\}.
\]

\noindent Step 2: Let $g(t):=a_dt^d+\cdots+a_{d+j}t^{d+j}$. Run algorithm \ref{A:d1} for $g(t)$ 
to get $b_d, \ldots, b_{d+j}$. If $b_d, \ldots, b_{d+j}\in \mathbb{Z}_{\geq 0}$ then put 
$\mbox{hdepth}(I)=d+j$ and stop; otherwise, set $j:=j-1$ and run step 2 again.

\noindent Output: $\mbox{hdepth}(I)$.

\end{algorithm}

\begin{remark}
Since $\mbox{hdepth}(I)\leq n$, in step 1 we choose the initial $j\leq n-d$. 
Also, if $j\geq \lfloor  \frac{a_{d+1}}{a_d} \rfloor+1$ then in step 2 we will
have $b_{d+1}<0$, so that we choose the initial $j\leq \lfloor  \frac{a_{d+1}}{a_d} \rfloor$. 
Finally, it is easy to see that step 2 stops in a finite number of steps. 
\end{remark}

Next we turn our attention to graded ideals in the polynomial ring. 
As to Hilbert functions, we have the classical Macaulay's Theorem.

\begin{theorem}[Macaulay \cite{B:Ma}]\label{T:Ma}
Let $I$ be a graded ideal in the polynomial ring $S=K[x_1,\ldots, x_n]$, 
then there exists a lex ideal $L$ in $S$ such that $H_L(t)=H_I(t)$.
\end{theorem}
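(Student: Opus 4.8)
The plan is to reduce the statement to a purely combinatorial fact about monomial ideals and then to isolate where the real work lies. First I would replace $I$ by a monomial ideal. Fix any term order $<$ on $S$ and let $\mathrm{in}_<(I)$ be the initial ideal of $I$. The standard monomials of degree $j$ (those not lying in $\mathrm{in}_<(I)$) form a $K$-basis of $(S/I)_j$, so $\dim_K (S/I)_j = \dim_K (S/\mathrm{in}_<(I))_j$ for every $j$, whence $H_{\mathrm{in}_<(I)}(t) = H_I(t)$. Therefore it suffices to produce a lex ideal with the same Hilbert series as a \emph{monomial} ideal, and from now on I assume $I$ is a monomial ideal and set $h(j) := \dim_K I_j$, where $I_j$ denotes the degree-$j$ graded piece.

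Next I would build the candidate $L$ degree by degree. For each $j$ let $L_j \subseteq S_j$ be the span of the largest $h(j)$ monomials of degree $j$ in the lex order, i.e.\ the \emph{lex segment} of size $h(j)$, and put $L = \bigoplus_j L_j$. Then $\dim_K L_j = h(j)$ by construction, so $H_L(t) = H_I(t)$ automatically, and the only thing to check is that $L$ is an ideal, equivalently that $\mathfrak{m} L_j \subseteq L_{j+1}$ for all $j$, where $\mathfrak{m} L_j$ is the span of the products $x_k u$ over variables $x_k$ and monomials $u \in L_j$. This reduces the theorem to two combinatorial assertions: (a) if $V$ is a lex segment in degree $j$, then its shadow $\mathfrak{m} V$ is again a lex segment in degree $j+1$; and (b), \emph{Macaulay's inequality}, among all spans of $h(j)$ monomials of degree $j$, the lex segment has a shadow of smallest dimension. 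Granting these, the deduction is short: since $I$ is an ideal, $\mathfrak{m} I_j \subseteq I_{j+1}$, so $\dim_K \mathfrak{m} I_j \leq h(j+1)$; applying (b) to the monomial set spanning $I_j$ gives $\dim_K \mathfrak{m} L_j \leq \dim_K \mathfrak{m} I_j \leq h(j+1)$; and by (a) the set $\mathfrak{m} L_j$ is itself a lex segment, hence of size at most $h(j+1)$, so it is contained in the lex segment $L_{j+1}$ (initial segments of a fixed degree are nested by size). Thus $\mathfrak{m} L_j \subseteq L_{j+1}$ and $L$ is an ideal.

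Fact (a) is elementary: a lex segment is an initial interval for the lex order, and a direct check shows that the union of its images under multiplication by the variables is again an initial interval in the next degree. The main obstacle is Fact (b), which carries the entire content of Macaulay's theorem. I would prove it by induction on the number of variables $n$. Splitting every degree-$j$ monomial according to the power of $x_n$ it contains exhibits any set of monomials as a stack of layers living in $K[x_1,\ldots,x_{n-1}]$; a compression argument in the spirit of Clements--Lindström then shows that replacing each layer by its own lex segment does not increase the total shadow, after which the estimate follows from the inductive hypothesis in $n-1$ variables together with the bookkeeping that recombines the layers. The delicate point is verifying that this simultaneous layer-by-layer compression genuinely produces the global lex segment and does not increase the shadow; the base case $n=1$ is immediate since each degree has a unique monomial. (Alternatively, one could route the proof through generic initial ideals and Green's hyperplane restriction theorem, but the compression argument is the most self-contained.)
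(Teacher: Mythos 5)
Note first that the paper does not prove Theorem \ref{T:Ma} at all: it quotes it as Macaulay's classical theorem, with the citation \cite{B:Ma}, and uses it as a black box when passing from a graded ideal $I$ to the lex ideal $L$ in Algorithms \ref{A:algorithm1} and \ref{A:algorithm2}. So there is no argument in the paper to compare yours with; the only question is whether your proposal is a complete proof on its own. Its outer layers are correct and standard: $H_{\mathrm{in}_<(I)}(t)=H_I(t)$ by the theory of standard monomials; defining $L_j$ as the lex segment of dimension $h(j)$ makes the equality of Hilbert series automatic; your fact (a) (the shadow of a lex segment is a lex segment) is true and elementary; and, granting (a) and (b), your nesting argument that $\mathfrak{m}L_j\subseteq L_{j+1}$ is exactly right.

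The gap is fact (b). The inequality that among all spans of $h(j)$ monomials of degree $j$ the lex segment has the smallest shadow is not an auxiliary lemma: it \emph{is} Macaulay's theorem, and every other step of your write-up is routine bookkeeping around it. For (b) you offer only a sketch — stratify by the power of $x_n$, compress each layer, induct on $n$ — and you yourself flag, without resolving, the two points that constitute the entire proof: that this compression does not increase the shadow, and that it terminates in the lex segment. The second claim is false in the naive form you state it: a set compressed with respect to every variable need not be lex. For instance $\{x_1^2,\,x_1x_2,\,x_2^2\}$ in $K[x_1,x_2,x_3]$ is compressed with respect to each of $x_1,x_2,x_3$ but is not a lex segment, since it omits $x_1x_3$. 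This is precisely why Clements--Lindström-type proofs require a further, genuinely intricate analysis comparing compressed-but-not-lex sets with lex segments before the shadow bound can be concluded (or, on your alternative route, why Green's hyperplane restriction theorem must actually be proved). As it stands, your proposal correctly reduces the theorem to its classical combinatorial core but does not prove that core, so it is incomplete exactly where all the difficulty of the statement is concentrated.
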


Since Hilbert depth depends only on the Hilbert series, 
it follows that 
\[
\mbox{hdepth}(I)=\mbox{hdepth}(L).
\]

Due to \cite{B:AHH}, a lex ideal $L$ can be related to a 
squarefree strongly stable monomial ideal $L^\sigma$, 
where the so-called \emph{squarefree operator} $\sigma$ was introduced by Gil Kalai. 
Let $u=x_{i_1}x_{i_2}\cdots x_{i_d}\in S$ with $i_1\leq i_2\leq \cdots \leq i_d$.
We define $\mbox{m}(u):=i_d$ and $u^\sigma:=x_{i_1}x_{i_2+1}\cdots x_{i_d+d-1}$,
which is squarefree. 
If $I$ is a monomial ideal in $S$ minimally generated by monomials $u_1, \ldots, u_s$,
then we define $I^\sigma$ to be the squarefree monomial ideal generated by 
$u_1^\sigma, \ldots, u_s^\sigma$ in $S'=K[x_1,\ldots,x_m]$, 
where $m=\max\{\mbox{m}(u_i)+\deg(u_i)-1\mid 1\leq i \leq s \}$.

\begin{lemma}[\cite{B:AHH} 1.2]\label{L:AHH}
Let $I$ be a strongly stable monomial ideal in $S=K[x_1,\ldots,x_n]$
minimally generated by $u_1, \ldots, u_s$, then $I^\sigma$ is a squarefree 
strongly stable monomial ideal in $S'=K[x_1, \ldots, x_m]$ minimally 
generated by $u_1^\sigma, \ldots, u_s^\sigma$ with $m=\max\{\mbox{m}(u_i)+\deg(u_i)-1\mid 1\leq i \leq s \}$.
\end{lemma}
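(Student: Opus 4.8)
The plan is to treat the squarefree operator $\sigma$ degree by degree and to show that it carries the combinatorial ``down-set'' description of strong stability to the corresponding description of squarefree strong stability. First I would record the elementary properties of $\sigma$ on a single monomial. For $u=x_{i_1}\cdots x_{i_d}$ with $i_1\le\cdots\le i_d$ the exponents of $u^\sigma=x_{i_1}x_{i_2+1}\cdots x_{i_d+d-1}$ are $i_p+p-1$, which are strictly increasing; hence $u^\sigma$ is squarefree, $\deg(u^\sigma)=\deg(u)$, and the largest variable occurring is $x_{i_d+d-1}=x_{\mathrm{m}(u)+\deg(u)-1}$, so all the $u_i^\sigma$ indeed lie in $S'=K[x_1,\dots,x_m]$. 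The assignment $i_p\mapsto j_p:=i_p+p-1$ is a bijection from nondecreasing sequences to strictly increasing sequences (inverse $j_p\mapsto j_p-(p-1)$), so in each degree $e$ the map $\sigma$ is a bijection from degree-$e$ monomials to degree-$e$ squarefree monomials. Moreover, writing $u\preceq u'$ for the Borel order (meaning $i_p\le i'_p$ for all $p$ on sorted exponents) and $\preceq_{\mathrm{sf}}$ for its squarefree analogue, the identity $j_p=i_p+p-1$ shows at once that $\sigma$ is an order isomorphism between these two posets.

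Next I would use the standard reformulation of (squarefree) strong stability: a strongly stable ideal $I$ has the property that, in each degree $e$, its set of monomials $I_{[e]}$ is a down-set for $\preceq$; likewise a squarefree monomial ideal is squarefree strongly stable iff in each degree its set of squarefree monomials is a down-set for $\preceq_{\mathrm{sf}}$. Both equivalences hold because one elementary stability move replaces a monomial by a $\preceq$-smaller (resp. $\preceq_{\mathrm{sf}}$-smaller) one, and such moves generate the orders. Granting this, the heart of the proof is the identity
\[
(I^\sigma)^{\mathrm{sf}}_{[e]}=\sigma\bigl(I_{[e]}\bigr)\qquad\text{for every }e,
\]
where the left side denotes the set of squarefree degree-$e$ monomials of $I^\sigma$. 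Once this is known, $I^\sigma$ is squarefree strongly stable: $I_{[e]}$ is a down-set for $\preceq$, and $\sigma$ is an order isomorphism, so its image $\sigma(I_{[e]})=(I^\sigma)^{\mathrm{sf}}_{[e]}$ is a down-set for $\preceq_{\mathrm{sf}}$.

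For the identity itself I would argue both inclusions by a direct index computation. For the inclusion $\subseteq$, a squarefree $w=x_{j_1}\cdots x_{j_e}\in I^\sigma$ is divisible by some $u_t^\sigma$; matching the exponents of $u_t^\sigma$ to a subset $\{j_{\pi(p)}\}$ of those of $w$ (with $\pi$ increasing, so $\pi(p)\ge p$) and applying $\tau=\sigma^{-1}$ gives, for the sorted exponents $a_q=j_q-(q-1)$ of $w^\tau$, the inequalities $a_{\pi(p)}\le i_p$, where $i_p$ are the exponents of $u_t$; thus the sub-monomial supported on the positions $\pi(p)$ is $\preceq u_t$, hence lies in $I$ by the down-set property, and it divides $w^\tau$, so $w^\tau\in I$. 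For the inclusion $\supseteq$ I would induct on $e$: given $u\in I_{[e]}$ with sorted exponents $a_1\le\cdots\le a_e$, pick a minimal generator $u_t\mid u$; the sub-monomial $u^{(t)}:=x_{a_1}\cdots x_{a_{\deg u_t}}$ formed by the first $\deg(u_t)$ factors satisfies $u^{(t)}\preceq u_t$, hence $u^{(t)}\in I$, and $(u^{(t)})^\sigma$ consists of the first $\deg(u_t)$ factors of $u^\sigma$, so $(u^{(t)})^\sigma\mid u^\sigma$. If $\deg u_t=e$ then $u=u_t$ and $u^\sigma=u_t^\sigma\in I^\sigma$; otherwise $\deg u^{(t)}<e$ and the induction hypothesis gives $(u^{(t)})^\sigma\in I^\sigma$, whence $u^\sigma\in I^\sigma$.

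It remains to prove minimal generation. The $u_i^\sigma$ generate $I^\sigma$ by definition, so I only need that none divides another. If $u_j^\sigma\mid u_i^\sigma$ with $i\ne j$, then $\deg u_j\le\deg u_i$, and the same matching computation as in the inclusion $\subseteq$ produces a divisor $u'\mid u_i$ with $u'\preceq u_j$ and $\deg u'=\deg u_j$, so that $u'\in I$. When $\deg u_j<\deg u_i$ this exhibits $u_i$ as a proper multiple of the element $u'\in I$, contradicting the minimality of $u_i$ among the generators of $I$; when $\deg u_j=\deg u_i$ the divisibility forces $u_i^\sigma=u_j^\sigma$, hence $u_i=u_j$ by injectivity of $\sigma$, again a contradiction. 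I expect the main obstacle to be the inclusion $\supseteq$: the naive guess that $u_t\mid u$ implies $u_t^\sigma\mid u^\sigma$ is false (small examples already show $u^\sigma$ may enter $I^\sigma$ through a \emph{different} generator), and it is precisely the down-set property of $I$ together with the passage to the first $\deg u_t$ factors, organized as an induction on degree, that repairs this.
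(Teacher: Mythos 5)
The paper gives you nothing to compare against here: Lemma \ref{L:AHH} is quoted from \cite{B:AHH}, Lemma 1.2, without proof, so your argument must stand on its own. Its skeleton is the natural one --- $\sigma$ as an order isomorphism between the dominance (Borel) order on sorted exponent sequences and its squarefree analogue, the down-set reformulation of (squarefree) strong stability, the matching computation for divisibility of squarefree monomials, and the degree induction --- and your minimal-generation argument is correct as written, because every monomial it produces has exponents bounded by those of some $u_i$, hence stays inside $S$. The genuine gap is that your central identity $(I^\sigma)^{\mathrm{sf}}_{[e]}=\sigma(I_{[e]})$ is \emph{false} as stated, in both directions, because it ignores the finite variable ranges of $S$ and $S'$. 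For $\supseteq$: take $I=(x_1)\subset K[x_1]$ (or $I=\mathfrak{m}\subset S$); then $m=1$ and $I^\sigma=(x_1)\subset S'=K[x_1]$, yet $x_1^2\in I_{[2]}$ and $\sigma(x_1^2)=x_1x_2$ is not even a monomial of $S'$, so $\sigma(I_{[2]})\not\subseteq (I^\sigma)^{\mathrm{sf}}_{[2]}$; correspondingly, your induction step ``$(u^{(t)})^\sigma\mid u^\sigma$, hence $u^\sigma\in I^\sigma$'' silently assumes $u^\sigma\in S'$. For $\subseteq$: take $I=(x_1,x_2^3)\subset K[x_1,x_2]$, which is strongly stable with $m=4$ and $I^\sigma=(x_1,x_2x_3x_4)$; then $w=x_1x_4$ is a squarefree degree-$2$ monomial of $I^\sigma$, but $w^\tau=x_1x_3$ is not a monomial of $S=K[x_1,x_2]$, so the concluding step ``$w^\tau\in I$'' of your $\subseteq$-argument is meaningless there. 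Since your proof of squarefree strong stability is exactly ``the image of the down-set $I_{[e]}$ under an order isomorphism is a down-set,'' this is a real hole, not a cosmetic one.

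The hole is repairable along your own lines, in either of two ways. (i) Run the whole argument for the extended ideals $I\cdot K[x_1,\ldots,N]$ and $I^\sigma\cdot K[x_1,\ldots,x_N]$ with $N\gg 0$ (or with countably many variables), where your identity and both of your inclusion proofs become correct verbatim; then restrict back, using the easy facts that extension of a (squarefree) strongly stable ideal is (squarefree) strongly stable and that $J\cdot K[x_1,\ldots,x_N]\cap S'=J$ for a monomial ideal $J\subset S'$. (ii) Avoid $w^\tau$ altogether and prove the down-set property of $(I^\sigma)^{\mathrm{sf}}_{[e]}$ directly: if $v\preceq_{\mathrm{sf}}w$ with $w\in I^\sigma$ squarefree and $u_t^\sigma\mid w$, apply your matching to $v$ rather than to $w$, obtaining a squarefree $v'\mid v$ of degree $\deg u_t$ with $v'\preceq_{\mathrm{sf}}u_t^\sigma$; then $(v')^\tau$ has exponents bounded by those of $u_t$, hence lies in $S$ and, by the down-set property of $I$, in $I$; finally your degree induction --- which now never leaves $S'$, since every $\sigma$-image it produces divides $v'$ --- gives $v'=((v')^\tau)^\sigma\in I^\sigma$, whence $v\in I^\sigma$. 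With either fix, and with your minimal-generation paragraph unchanged, the proof is complete.
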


\begin{remark}
Lemma 1.8 in \cite{B:AHH} says that if $L$ is a lex ideal in $S$ then 
$L^\sigma$ is a squarefree lex ideal in $S'$. 
In general, this is \emph{not true} as we will see in Example \ref{E:ex1}. 
However, if $L$ is a lex ideal generated by monomials in the same degree, 
then $L^\sigma$ is squarefree lex, because there is an order-preserving 
bijection between the set of degree $d$ monomials in $K[x_1,\ldots,x_n]$ 
and the set of degree $d$ squarefree monomials in $K[x_1,\ldots, x_{n+d-1}]$. 
Anyway, when $L$ is a lex ideal, by the above lemma, we can conclude that 
$L^\sigma$ is a squarefree strongly stable monomial ideal. 
\end{remark}

\begin{remark}\label{R:relation}
By Lemma 2.2 in \cite{B:AHH}, if $L$ is a lex ideal then $L$ and $L^\sigma$ 
have the same graded Betti numbers, that is, $\beta_{i,j}^S(L)=\beta_{i,j}^{S'}(L^\sigma)$.
Since
\[
H_L(t)=\frac{\sum_{i,j}(-1)^i\beta_{i,j}^S(L)t^j}{(1-t)^n}
\]
and 
\[
H_{L^\sigma}(t)=\frac{\sum_{i,j}(-1)^i\beta_{i,j}^{S'}(L^\sigma)t^j}{(1-t)^m},
\]
it follows that 
\[
H_{L^\sigma}(t)=H_L(t)\frac{1}{(1-t)^{m-n}}.
\]
So, by Theorem \ref{T:Ul}, if $\mbox{hdepth}(L^\sigma)=p$ then $\mbox{hdepth}(L)=p-(m-n)$.
\end{remark}

By the above results, we have the following algorithm 
for computing the Hilbert depth of a graded ideal.

\begin{algorithm}\label{A:algorithm1}
Let $I$ be a graded ideal in the polynomial ring $S=K[x_1,\ldots,x_n]$.

\noindent Input: $I$

\noindent Step 1: Use Grobner basis theory to get the initial ideal $\mbox{in}_{<\mbox{lex}}(I)$.
                  Let $L$ be the lex ideal in $S$ with the same Hilbert function as $\mbox{in}_{<\mbox{lex}}(I)$.
                  Find the minimal generators of $L$ and denote them by $u_1,\ldots, u_s$.
                  
\noindent Step 2: Let $m=\max\{\mbox{m}(u_i)+\deg(u_i)-1\mid 1\leq i \leq s \}$.
                  Let $L^\sigma=(u_1^\sigma, \ldots, u_s^\sigma)\subset S'=K[x_1,\ldots, x_m]$.
                  Use Algorithm \ref{A:squarefree} to compute $\mbox{hdepth}(L^\sigma)$.
                  
\noindent Output: $\mbox{hdepth}(L)=\mbox{hdepth}(L^\sigma)-(m-n)$.                                    
\end{algorithm}

\begin{example}\label{E:ex1}
Let $I=(x_1^2, x_1x_2, x_1x_3, x_2^2, x_3^2)\subset S=K[x_1,x_2,x_3]$, then
\[
L=(x_1^2, x_1x_2, x_1x_3, x_2^2, x_2x_3, x_3^3)
\]
is the lex ideal in $S$ with the same Hilbert function as $I$, and 
\[
L^\sigma=(x_1x_2, x_1x_3, x_1x_4, x_2x_3, x_2x_4, x_3x_4x_5)\subset S'=K[x_1, x_2, x_3, x_4, x_5].
\]
Note that $L^\sigma$ is a squarefree strongly stable monomial ideal in $S'$, but 
$L^\sigma$ is not squarefree lex, because $x_1x_5>_{\mbox{lex}}x_2x_3$ and $x_1x_5\notin L^\sigma$.

In $L^\sigma$, the squarefree monomials of degree $2$ are $x_1x_2, x_1x_3, x_1x_4, x_2x_3, x_2x_4$;
the squarefree monomials of degree $3$ are $x_1x_2x_3, x_1x_2x_4,x_1x_2x_5,x_1x_3x_4,x_1x_3x_5$,
$x_1x_4x_5,x_2x_3x_4,x_2x_3x_5,x_2x_4x_5,x_3x_4x_5$;
the squarefree monomials of degree $4$ are $x_1x_2x_3x_4,x_1x_2x_3x_5,x_1x_2x_4x_5,x_1x_3x_4x_5,x_2x_3x_4x_5$;
the squarefree monomials of degree $5$ is $x_1x_2x_3x_4x_5$. Thus,
\[
f(t)=5t^2+10t^3+5t^4+t^5.
\]
Let $j=\lfloor \frac{10}{5} \rfloor=2$ then $f(t)=5t^2(1+t)^2+t^5$. 
So, $\mbox{hdepth}(L^\sigma)=2+2=4$ and then
$\mbox{hepth}(I)=\mbox{hepth}(L)=4-(5-3)=2$.
\end{example}

\begin{remark}\label{R:remarkex1}
By the method of \cite{B:HVZ} we see that $\mbox{sdepth}(L^\sigma)=3<\mbox{hdepth}(L^\sigma)$.
Indeed, we can think of the poset $P_{L^\sigma}^{(1,1,1,1,1)}$ 
as the set of all squarefree monomials in $L^\sigma$,
then $x_3x_4x_5$ can not be divided by any of $x_1x_2, x_1x_3, x_1x_4, x_2x_3, x_2x_4$ 
and there are only $9$ degree-$3$ squarefree monomials left for these $5$ degree-$2$ squarefree monomials.
This is an example of a squarefree strongly stable monomial ideal 
with its Stanley depth less than its Hilbert depth.
In Counterexamples \ref{C:cex3} and \ref{C:cex4}, we will see that even if a 
(squarefree) lex ideal is generated by monomials of 
the same degree, its Stanley depth can still be less than its Hilbert depth. 
\end{remark}

Let $I=(x_1^2,\ldots,x_{10}^2)\subset S=K[x_1,\ldots,x_{10}]$, 
then $L=(x_1^2,\ldots,x_1x_{10},x_2^3,\ldots,$ 
$x_2x_{10}^2, \ldots, x_{10}^{11})$
will be the lex ideal in $S$ with the same Hilbert function as $I$ and 
$L^\sigma\subset S'=K[x_1,\ldots, x_{20}]$. There are many generators 
in $L^\sigma$ and there are $20$ variables, so the computation will be very heavy. 
On the other hand, the Hilbert series $H_I(t)$ can be obtained easily
from the resolution of $I$ over $S$. Therefore, it would be handy 
to have an algorithm which computes $\mbox{hdepth}(I)$ directly 
from $H_I(t)$. Next, we will develop such an algorithm (Algorithm \ref{A:algorithm2}) and 
we will use it to compute $\mbox{hdepth}(x_1^2, \ldots, x_{10}^2)$ 
in Example \ref{E:ex4}.

Let $I$ be a proper squarefree monomial ideal in $S=K[x_1,\ldots, x_n]$.
With the notations as in Theorem \ref{T:key}, if $\mbox{hdepth}(I)\geq p$ 
then by part (3) of Theorem \ref{T:key}, 
there exist $b_d, \ldots, b_n\in \mathbb{Z}_{\geq 0}$ such that 
\begin{align*}
&H_I(t)\\
&=\frac{b_dt^d+\cdots+b_pt^p}{(1-t)^p}+\frac{b_{p+1}t^{p+1}}{(1-t)^{p+1}}+\cdots+\frac{b_nt^n}{(1-t)^n}\\
&=\frac{b_dt^d(1-t)^{n-p}+\cdots+b_pt^p(1-t)^{n-p}+b_{p+1}t^{p+1}(1-t)^{n-p-1}+\cdots+b_nt^n}{(1-t)^n}.
\end{align*}
Hence,
\[
H_I(t)=\frac{c_dt^d+\cdots+c_nt^n}{(1-t)^n} \ \mbox{for some } \ c_d, \ldots, c_n \in \mathbb{Z},
\]
and 
\begin{align*}
&c_dt^d+\cdots+c_nt^n\\
&=b_dt^d(1-t)^{n-p}+\cdots+b_pt^p(1-t)^{n-p}+b_{p+1}t^{p+1}(1-t)^{n-p-1}+\cdots+b_nt^n. 
\end{align*}
So we have the following corollary similar to Corollary \ref{C:key}.

\begin{corollary}\label{C:key2}
Let $I$ be a proper squarefree monomial ideal in $S=K[x_1,\ldots, x_n]$.
With the notations as in Theorem \ref{T:key}, we have that
\begin{itemize}
\item[(1)] there exist $c_d, \ldots, c_n \in \mathbb{Z}$ such that 
           \[
           H_I(t)=\frac{c_dt^d+\cdots+c_nt^n}{(1-t)^n};
           \]
\item[(2)] $\mbox{hdepth}(I)$ is equal to the maximal $n-q$ such that 
           \begin{align*}
           &c_dt^d+\cdots+c_nt^n\\
           &=b_dt^d(1-t)^q+\cdots+b_{n-q}t^{n-q}(1-t)^q+b_{n-q+1}t^{n-q+1}(1-t)^{q-1}+\cdots+b_nt^n.
           \end{align*}
           with $b_d, \ldots, b_n\in \mathbb{Z}_{\geq 0}$.
\end{itemize}
\end{corollary}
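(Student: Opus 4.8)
The plan is to deduce both parts directly from Theorem \ref{T:key}, treating part (1) as a rewriting over a common denominator and part (2) as a two-sided comparison with $\mbox{hdepth}(I)$. For part (1), I would start from the formula $H_I(t)=\sum_{i=d}^n a_it^i/(1-t)^i$ of Theorem \ref{T:key}(1) and clear to the common denominator $(1-t)^n$: multiplying the $i$-th summand by $(1-t)^{n-i}/(1-t)^{n-i}$ gives $H_I(t)=\bigl(\sum_{i=d}^n a_it^i(1-t)^{n-i}\bigr)/(1-t)^n$, whose numerator is an integer polynomial supported in degrees between $d$ and $n$. Defining $c_d,\dots,c_n\in\mathbb{Z}$ to be its coefficients yields part (1); note that $c_d=a_d$ is automatically positive, so the support really does begin in degree $d$.

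For part (2), I would set $p:=n-q$ and prove that the prescribed decomposition of the numerator $c_dt^d+\cdots+c_nt^n$ exists with $b_d,\dots,b_n\in\mathbb{Z}_{\geq 0}$ if and only if $\mbox{hdepth}(I)\geq p$; the assertion then follows by taking the maximum over all such $p$. The forward implication is essentially the computation already recorded in the paragraph preceding the corollary: if $\mbox{hdepth}(I)\geq p$, then Theorem \ref{T:key}(3) supplies $b_d,\dots,b_p\in\mathbb{Z}_{\geq 0}$, and setting $b_{p+1}:=a_{p+1},\dots,b_n:=a_n$ (which are nonnegative integers by construction as counts of squarefree monomials) and clearing to the denominator $(1-t)^n$ produces exactly the required expression $\sum_{i=d}^{p} b_it^i(1-t)^{n-p}+\sum_{i=p+1}^{n} b_it^i(1-t)^{n-i}$ for the numerator.

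For the converse, I would take any decomposition of the numerator in the stated form, for some $q$, with nonnegative integer coefficients $b_i$, divide through by $(1-t)^n$, and read off $H_I(t)=\bigl(\sum_{i=d}^{p} b_it^i\bigr)/(1-t)^p+\sum_{i=p+1}^{n} b_it^i/(1-t)^i$, where $p=n-q$. Each summand here has a numerator lying in $\mathbb{Z}_{\geq 0}[t]$ and a denominator $(1-t)^j$ with $j\geq p$, so this is precisely a presentation of the type in Theorem \ref{T:Ul}(2); hence $\mbox{hdepth}(I)\geq p$. Combining the two implications shows that the set of admissible $p=n-q$ coincides with $\{p:\mbox{hdepth}(I)\geq p\}$, so their maxima agree, which is part (2). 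I do not expect any genuine obstacle, since the two implications are immediate consequences of Theorem \ref{T:key}(3) and Theorem \ref{T:Ul} respectively; the only points demanding care are the bookkeeping of the exponents on $(1-t)$ across the two ranges $d\leq i\leq p$ and $p<i\leq n$, and checking that passing to the common denominator $(1-t)^n$ preserves the nonnegativity and integrality of the $b_i$.
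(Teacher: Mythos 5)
Your proposal is correct and takes essentially the same approach as the paper: part (1) by clearing the decomposition from Theorem \ref{T:key} to the common denominator $(1-t)^n$, and part (2) by combining Theorem \ref{T:key}(3) for the forward direction with Theorem \ref{T:Ul}(2) for the converse. The paper's own justification is precisely the computation in the paragraph preceding the corollary (the forward direction), with the converse appeal to Theorem \ref{T:Ul} left implicit; you have simply spelled out both directions.
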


\begin{remark}\label{R:short}
In part (1) of the above theorem, it is easy to see that $c_d$ is the number 
of monomial generators of $I$ of degree $d$. However, $c_n$ can be $0$. For example,
let $L=(x_1^2, x_1x_2, x_1x_3, x_2^3)$ be a lex ideal in $K[x_1,x_2,x_3]$, then 
$L^\sigma=(x_1x_2, x_1x_3, x_1x_4, x_2x_3x_4)$ is a squarefree lex ideal in $K[x_1, x_2,x_3,x_4]$.
The resolution of $L^\sigma$ is given by the Eliahou-Kervarie resolution \cite{B:EK}, which says:
$x_1x_2$ gives rise to a basis element $(x_1x_2;\emptyset)$ in homological degree $0$;
$x_1x_3$  gives rise to a basis element $(x_1x_3;\emptyset)$ in homological degree $0$ and a basis element $(x_1x_3;2)$ in homological degree $1$;
$x_1x_4$  gives rise to a basis element $(x_1x_4;\emptyset)$ in homological degree $0$, two basis elements $(x_1x_4;2)$ $(x_1x_4;3)$ in homological degree $1$ and a basis element $(x_1x_4;2,3)$ in homological degree $2$; $x_2x_3x_4$  gives rise to a basis element $(x_2x_3x_4;\emptyset)$ in homological degree $0$ and a basis element $(x_2x_3x_4;1)$ in homological degree $1$. So the multigraded Hilbert series of $L^\sigma$ is 
\[
H_{L^\sigma}(T_1,T_2,T_3,T_4)=\frac{T_1T_2+T_1T_3(1-T_2)+T_1T_4(1-T_2)(1-T_3)+T_2T_3T_4(1-T_1)}{(1-T_1)(1-T_2)(1-T_3)(1-T_4)},
\]
and the Hilbert series of $L^\sigma$ is 
\[
H_{L^\sigma}(t)=\frac{t^2+t^2(1-t)+t^2(1-t)^2+t^3(1-t)}{(1-t)^4}=\frac{3t^2-2t^3}{(1-t)^4}.
\]
\end{remark}

It is easy to see that given any $0\leq q \leq n-d$, $b_d, \ldots, b_n$ 
can be uniquely determined by $c_d,\ldots, c_n$ such that the identity in part (2) 
of the above theorem holds. Indeed, we have the following algorithm. 
Algorithm \ref{A:d2} looks long, but later in the examples, we 
will use simple diagrams to represent the computations in this algorithm.

\begin{algorithm}\label{A:d2}
Let $h(t)=c_dt^d+\cdots+c_nt^n$ be a polynomial in $t$ with $c_d,\ldots, c_n\in \mathbb{Z}$. 
Let $0\leq q \leq n-d$.

\noindent Input: $h(t)=c_dt^d+\cdots+c_nt^n$ and $q$.

\noindent Step $1$: Let $b_d=c_d$ and let 
                  \[
                  h_1(t)=h(t)-b_dt^d(1-t)^q.
                  \]
                   Simplify $h_1(t)$ to get 
                 \[
                 h_1(t)=c_{d+1}^{(1)}t^{d+1}+\cdots+c_{n}^{(1)}t^n
                 \]
                 with $c_{d+1}^{(1)},\ldots, c_n^{(1)}\in \mathbb{Z}$.
                  
\noindent Step $2$: Let $b_{d+1}=c_{d+1}^{(1)}$ and let \[ h_2(t)=h_1(t)-b_{d+1}t^{d+1}(1-t)^q.\] Simplify $h_2(t)$ to 
                  get \[h_2(t)=c_{d+2}^{(2)}t^{d+2}+\cdots+c_{n}^{(2)}t^n\] with $c_{d+2}^{(2)},\ldots, c_n^{(2)}\in \mathbb{Z}$.   

\noindent \ldots \ldots \ldots 

\noindent Step $(n-q-d+1)$: Let $b_{n-q}=c_{n-q}^{(n-q-d)}$ and let \[ h_{n-q-d+1}(t)=h_{n-q-d}(t)-b_{n-q}t^{n-q}(1-t)^q. \] Simplify $h_{n-q-d+1}(t)$ to 
                  get \[ h_{n-q-d+1}(t)=c_{n-q+1}^{(n-q-d+1)}t^{n-q+1}+\cdots+c_{n}^{(n-q-d+1)}t^n \] 
                  with $c_{n-q+1}^{(n-q-d+1)},\ldots, c_{n}^{(n-q-d+1)}\in \mathbb{Z}$.    
                  
\noindent Step $(n-q-d+2)$:  Let $b_{n-q+1}=c_{n-q+1}^{(n-q-d+1)}$ and let \[ h_{n-q-d+2}(t)=h_{n-q-d+1}(t)-b_{n-q+1}t^{n-q+1}(1-t)^{q-1}. \] Simplify $h_{n-q-d+2}(t)$ to 
                  get \[ h_{n-q-d+2}(t)=c_{n-q+2}^{(n-q-d+2)}t^{n-q+2}+\cdots+c_{n}^{(n-q-d+2)}t^n \]
                  with $c_{n-q+2}^{(n-q-d+2)},\ldots, c_{n}^{(n-q-d+2)}\in \mathbb{Z}$. 
                  
\noindent Step $(n-q-d+3)$:  Let $b_{n-q+2}=c_{n-q+2}^{(n-q-d+2)}$ and let \[ h_{n-q-d+3}(t)=h_{n-q-d+2}(t)-b_{n-q+2}t^{n-q+2}(1-t)^{q-2}.\] Simplify $h_{n-q-d+3}(t)$ to 
                  get \[ h_{n-q-d+3}(t)=c_{n-q+3}^{(n-q-d+3)}t^{n-q+3}+\cdots+c_{n}^{(n-q-d+3)}t^n \]
                  with $c_{n-q+3}^{(n-q-d+3)},\ldots, c_{n}^{(n-q-d+3)}\in \mathbb{Z}$. 
                  
\noindent \ldots \ldots \ldots 

\noindent Step $(n-d+1)$:  Let $b_{n}=c_{n}^{(n-d)}$.             
                                   
\noindent Output: $b_d,\ldots, b_n\in \mathbb{Z}$.                           
\end{algorithm}

Now let $I, L\in S$ and $L^\sigma \in S'$ be as in Algorithm \ref{A:algorithm1}.
By Remark \ref{R:relation} we see that if 
\[
H_{L^\sigma}(t)=\frac{c_dt^d+\cdots+c_mt^m}{(1-t)^m},
\]
then
\[
H_I(t)=\frac{c_dt^d+\cdots+c_mt^m}{(1-t)^n}.
\]
Hence, for a given $0\leq q\leq m-d$, if by Algorithm \ref{A:d2} 
we get $b_d, \ldots, b_m$ such that $b_i<0$ for some $d\leq i\leq m$, 
then by part (3) of Theorem \ref{T:key} we have 
$\mbox{hdepth}(L^\sigma)<m-q$, so that
\[
\mbox{hdepth}(I)=\mbox{hdepth}(L^\sigma)-(m-n)<n-q.
\]
On the other hand, if there exists $b_d, \ldots, b_m \in \mathbb{Z}_{\geq 0}$ and $\beta_d, \ldots, \beta_m\in \mathbb{Z}_{\geq 0}$
such that
\[
c_dt^d+\cdots+c_mt^m=\sum_{i=d}^mb_it^i(1-t)^{\beta_i},
\]
then $\mbox{hdepth}(I)\geq n-q$ where $q=\max \{\beta_i \mid d\leq i\leq m\}$.
By these observations, we have the following algorithm for
computing the Hilbert depth of a graded ideal. 

\begin{algorithm}\label{A:algorithm2}
Let $I$ be a graded ideal in the polynomial ring $S=K[x_1,\ldots,x_n]$.

\noindent Input: $I$

\noindent Step 1: Let $L$ be the lex ideal in $S$ with the same Hilbert function as $I$.
                  Find the minimal generators of $L$ and denote them by $u_1,\ldots, u_s$.
                  Let $m=\max\{\mbox{m}(u_i)+\deg(u_i)-1\mid 1\leq i \leq s \}$.
                  
\noindent Step 2: Find the Hilbert series of $I$:
\[
H_I(t)=\frac{c_dt^d+\cdots+c_rt^r}{(1-t)^n}. 
\]
(Note that $r\leq m$ and in Remark \ref{R:short} we have an example with $r<m$.)
Set $Q(t):=c_dt^d+\cdots+c_mt^m$, where $c_{r+1}=\cdots=c_m=0$ if $r<m$. Set $q:=0$.

\noindent Step 3: Apply algorithm \ref{A:d2} to $Q(t)$ and $q$, and we get $b_d, \ldots, b_m$. 
If $b_d, \ldots, b_{m}\in \mathbb{Z}_{\geq 0}$ then put 
$\mbox{hdepth}(I)=n-q$ and stop; otherwise, set $q:=q+1$ and run step 3 again.
                  
\noindent Output: $\mbox{hdepth}(I)$.                                  
\end{algorithm}

In the following examples, we will compute the Hilbert depth of some graded ideals by using Algorithm \ref{A:algorithm2}, 
and we will compare  Algorithm \ref{A:algorithm2} with the algorithms in \cite{B:Po} and \cite{B:BMU}.

\begin{example}\label{E:ex2}
Let $I=(x_1^2, x_1x_2, x_1x_3, x_2^2, x_3^2)\subset S=K[x_1,x_2,x_3]$ 
be as in Example \ref{E:ex1}, where we have computed 
$\mbox{hdepth}(I)=2$ by Algorithm \ref{A:algorithm1}. 
Now we will use Algorithm \ref{A:algorithm2} to compute $\mbox{hdepth}(I)$. 
Let $L$ be the lex ideal in $S$ with the same Hilbert function as $I$, then
\[
L=(x_1^2, x_1x_2, x_1x_3, x_2^2, x_2x_3, x_3^3) \ \mbox{and}\ m=5.
\]
The resolution of $L$ is given by the Eliahou-Kervarie resolution \cite{B:EK}, which says:
$x_1^2$  gives rise to a basis element $(x_1^2;\emptyset)$ in homological degree $0$;
$x_1x_2$  gives rise to a basis element $(x_1x_2;\emptyset)$ in homological degree $0$ and a basis element $(x_1x_2;1)$ in homological degree $1$;
$x_1x_3$  gives rise to a basis element $(x_1x_3;\emptyset)$ in homological degree $0$, two basis elements $(x_1x_3;1)$ $(x_1x_3;2)$ in homological degree $1$ and a basis element $(x_1x_3;1,2)$ in homological degree $2$; 
$x_2^2$  gives rise to a basis element $(x_2^2;\emptyset)$ in homological degree $0$ and a basis element $(x_2^2;1)$ in homological degree $1$;
$x_2x_3$  gives rise to a basis element $(x_2x_3;\emptyset)$ in homological degree $0$, two basis elements $(x_2x_3;1)$ $(x_2x_3;2)$ in homological degree $1$ and a basis element $(x_2x_3;1,2)$ in homological degree $2$; 
$x_3^3$  gives rise to a basis element $(x_3^3;\emptyset)$ in homological degree $0$, two basis elements $(x_3^3;1)$ $(x_3^3;2)$ in homological degree $1$ and a basis element $(x_3^3;1,2)$ in homological degree $2$. So the multigraded Hilbert series of $L$ is 
\begin{align*}
&H_{L}(T_1,T_2,T_3)\\
&=\begin{matrix}\frac{T_1^2+T_1T_2(1-T_1)+T_1T_3(1-T_1)(1-T_2)+T_2^2(1-T_1)+T_2T_3(1-T_1)(1-T_2)+T_3^3(1-T_1)(1-T_2)}{(1-T_1)(1-T_2)(1-T_3)}\end{matrix},
\end{align*}
and the Hilbert series of $I$ is 
\begin{align*}
H_I(t)=H_L(t)&=\frac{t^2+t^2(1-t)+t^2(1-t)^2+t^2(1-t)+t^2(1-t)^2+t^3(1-t)^2}{(1-t)^3}\\
             &=\frac{5t^2-5t^3+t^5}{(1-t)^3}.
\end{align*}
Run Algorithm \ref{A:algorithm2}. $q=0$ does not work because $b_3=-5<0$. 
Then we try $q=1$, and the computation 
of $b_2, b_3, b_4, b_5$ by Algorithm \ref{A:d2} is shown in the following diagram::
\[
\begin{matrix}
\hline
5&-5&0&1\\
5&-5&\ &\ \\
\hline
\ &\ &\ &1
\end{matrix}
\]
we see that $b_2=5, b_3=b_4=0, b_5=1$. 
So $q=1$ works and $\mbox{hdepth}(I)=3-1=2$ as expected. 

By the algorithm in \cite{B:Po}, we compute
\[
\frac{5t^2-5t^3+t^5}{1-t}=5t^2+0t^3+0t^4+t^5+\cdots \tag{1}
\]
and then $\mbox{hdepth}(I)=3-1=2$.

To use the algorithm in \cite{B:BMU}, we set $Q(t)=5t^2-5t^3+t^5$. 
Then $\tilde{Q}(t)=1-5t^3+5t^4-t^5$,
 $\delta_3(\tilde{Q})=0$ and $e=\max\{0,6\}=6$. 
Therefore, by the computation in (1) we have $\mbox{hdepth}(I)=3-1=2$.
\end{example}

\begin{example}\label{E:ex3}
Let $\mathfrak{m}=(x_1,\ldots, x_5)\subset K[x_1, \ldots, x_5]$. By \cite{B:BKU2}, we know
$\mbox{hdepth}(\mathfrak{m})=\lceil \frac{5}{2} \rceil=3$. 
Now we use Algorithm \ref{A:algorithm2} to compute $\mbox{hdepth}(\mathfrak{m})$. 
First, we have 
\[
H_{\mathfrak{m}}(t)=\frac{1}{(1-t)^5}-1=\frac{5t-10t^2+10t^3-5t^4+t^5}{(1-t)^5} \ \mbox{and}\ m=5.
\]
Obviously, $q=0$ does not work. If $q=1$, then by the following diagram
\[
\begin{matrix}
\hline
5&-10&10&-5&1\\
5&-5&\ &\ &\ \\
\hline
\ &-5&10&-5&1
\end{matrix}
\]
we see that $b_2=-5<0$. Hence, $q=1$ does not work. 
Next we try $q=2$ and Algorithm \ref{A:d2} is represented by the following diagram:
\[
\begin{matrix}
\hline
5&-10&10&-5&1\\
5&-10&5&\ &\ \\
\hline
\ &\ &5&-5&1\\
\ &\ &5&-10&5\\
\hline
\ &\ &\ &5&-4\\
\ &\ &\ &5&-5\\
\hline
\ &\ &\ &\ &1
\end{matrix}
\]
Hence, $b_1=5, b_2=0, b_3=5, b_4=5, b_5=1$, so that $q=2$ works and 
$\mbox{hdepth}(\mathfrak{m})=5-2=3$ as expected. 

Note that there is an easier way to show that $q=2$ works, 
that is, we have the following diagram:
\[
\begin{matrix}
\hline
5&-10&10&-5&1\\
5&-10&5&\ &\ \\
\hline
\ &\ &5&-5&1\\
\ &\ &5&-5&\ \\
\hline
\ &\ &\ &\ &1
\end{matrix}
\]
Hence, we have $5t-10t^2+10t^3-5t^4+t^5=5t(1-t)^2+5t^3(1-t)+t^5$, 
which also implies that $q=2$ works.

By the algorithm in \cite{B:Po}, we compute
\begin{align*}
\frac{5t-10t^2+10t^3-5t^4+t^5}{1-t}&=5t-5t^2+5t^3+0t^4+t^5+\cdots \\
\frac{5t-10t^2+10t^3-5t^4+t^5}{(1-t)^2}&=5t+0t^2+5t^3+5t^4+6t^5+\cdots \tag{2}
\end{align*}
and then $\mbox{hdepth}(I)=5-2=3$.

To use the algorithm in \cite{B:BMU}, we set $Q(t)=5t-10t^2+10t^3-5t^4+t^5$. 
Then $\tilde{Q}(t)=1-t^5$,
 $\delta_5(\tilde{Q})=0$ and $e=\max\{0,6\}=6$. 
Therefore, by the computation in (2) we have $\mbox{hdepth}(I)=5-2=3$.
\end{example}

Algorithm \ref{A:algorithm2} depends not only on the Hilbert series $H_I(t)$ 
but also on the number $m$. However, after computing many examples the author 
was unable to find an example where the number $m$ really matters. 
Just like the above two examples, most of the time $m$ and $r$ are the same, 
so  the calculation depends only on $H_I(t)$. 
In the rare cases when $r< m$, the following two examples suggest that we could still
ignore $m$.

\begin{example}\label{E:r1}
Let $L=(x_1^2, x_1x_2, x_1x_3, x_2^3)\subset K[x_1, x_2, x_3]$ be as in Remark \ref{R:short},
then we have 
\[
H_L(t)=\frac{3t^2-2t^3}{(1-t)^3}, \ \mbox{and} \ r=3<m=4.
\]
Run Algorithm \ref{A:algorithm2}. $q=0$ does not work because $b_3=-2<0$. Then we try $q=1$. The computation 
of $b_2, b_3, b_4$ by Algorithm \ref{A:d2} is shown in the following diagram:
\[
\begin{matrix}
\hline
3&-2&0\\
3&-3&\ \\
\hline
\ &1&0\\
\ &1&-1\\
\hline
\ &\ & 1
\end{matrix}
\]
Then $b_2=3,b_3=1,b_4=1$, so that $q=1$ works and $\mbox{hdepth}(L)=3-1=2$. 
However, even if we do not know $m=4$, 
we can still see that $q=0$ does not work and for $q=1$, 
from the following diagram,
\[
\begin{matrix}
\hline
3&-2\\
3&-3\\
\hline
\ &1
\end{matrix}
\]
we see that $3t^2-2t^3=3t^2(1-t)+t^3$, 
which implies $q=1$ works. So, $\mbox{hdepth}(L)=3-1=2$.

By the algorithm in \cite{B:Po}, we compute
\[
\frac{3t^2-2t^3}{1-t}=3t^2+t^3+\cdots \tag{3}
\]
and then $\mbox{hdepth}(I)=3-1=2$.

To use the algorithm in \cite{B:BMU}, we set $Q(t)=3t^2-2t^3$. 
Then $\tilde{Q}(t)=1-3t^2+2t^3$,
 $\delta_3(\tilde{Q})=2$ and $e=\max\{2,4\}=4$. 
Therefore, by the computation in (3) we have $\mbox{hdepth}(I)=3-1=2$.
\end{example}

\begin{example}\label{E:r2}
In the previous example $r=m-1$. The simplest example one can find with $r=m-2$ is the following.
Let $L$ be the lex ideal in $K[x_1, x_2, x_3, x_4]$ minimally generated by $x_1^2, x_1x_2, x_1x_3, x_1x_4, $ $ x_2^2, x_2x_3, x_2x_4^2,$ $ x_3^4$,
then we have 
\[
H_L(t)=\frac{6t^2-8t^3+3t^4}{(1-t)^4}, \ \mbox{and} \ r=4<m=6.
\]
Run Algorithm \ref{A:algorithm2}. $q=0$ does not work because $b_3=-8<0$.  $q=1$ does not work because $b_3=-2<0$.Then we try $q=2$. 
By the following diagram:
\[
\begin{matrix}
\hline
6&-8&3 \\
6&-12&6 \\
\hline
\ &4&-3 \\
\ &4&-4 \\
\hline
\ &\  &1 
\end{matrix}
\]
we see that $6t^2-8t^3+3t^4=6t^2(1-t)^2+4t^3(1-t)+t^4$. 
Hence, $q=2$ works and $\mbox{hdepth}(L)=4-2=2$. 
Note that the computation works even if we do not know $m=6$.
And we have a decomposition of $H_L(t)$:
\[
H_L(t)=\frac{6t^2}{(1-t)^2}+\frac{4t^3}{(1-t)^3}+\frac{t^4}{(1-t)^4}.
\]

By the algorithm in \cite{B:Po}, we compute
\begin{align*}
\frac{6t^2-8t^3+3t^4}{1-t}&=6t^2-2t^3+t^4+\cdots \\
\frac{6t^2-8t^3+3t^4}{(1-t)^2}&=6t^2+4t^3+5t^4+\cdots \tag{4}
\end{align*}
and then $\mbox{hdepth}(L)=4-2=2$.

To use the algorithm in \cite{B:BMU}, we set $Q(t)=6t^2-8t^3+3t^4$. 
Then $\tilde{Q}(t)=1-4t^3+3t^4$, and we can calculate that
 $\delta_4(\tilde{Q})=4$ and $e=\max\{4,5\}=5$. 
Therefore, by the computation in (4) we have $\mbox{hdepth}(L)=4-2=2$.
And by the method of \cite{B:BMU}, we get a decomposition of $H_L(t)$:
\[
H_L(t)=\frac{6t^2+4t^3+5t^4}{(1-t)^2}+\frac{5t^5}{(1-t)^3}+\frac{t^5}{(1-t)^4},
\]
which is different from the decomposition previously obtained by Algorithm \ref{A:algorithm2}. 
\end{example}

\begin{remark}\label{R:comparison}
From the above four examples, we can see that Algorithm \ref{A:algorithm2}
is different from the algorithms in \cite{B:Po} and \cite{B:BMU}. 
Also, the author feels that in general, $m$ is not needed when applying  
Algorithm \ref{A:algorithm2} to graded ideals. For general modules over 
the polynomial ring, it is a different story and the following example is interesting. 

Let $M=K\bigoplus x_1^3K[x_1,x_2,x_3]$ be a module over $K[x_1,x_2,x_3]$,
then 
\[
H_M(t)=\frac{1-3t+3t^2}{(1-t)^3}.
\]
If we  calculate $\mbox{hdepth}(M)$ by a method similar to Algorithm \ref{A:algorithm2},
we have the following diagram:
\[
\begin{matrix}
\hline
1&-3&3&0\\
1&-3&3&-1  \\
\hline
\ &\ &\ &1 
\end{matrix}
\]
which imples that $1-3t+3t^2=(1-t)^3+t^3$ and then $\mbox{hdepth}(M)=3-3=0$. 
Note that in the above diagram, $0$ must be added to the end of the first row;
otherwise, one can not proceed the calculation. 
However, it is easy to see that $H_M(t)$ can not be 
the Hilbert series of a graded ideal in a polynomial ring. 

To use the algorithm in \cite{B:Po}, we compute
\begin{align*}
\frac{1-3t+3t^2}{1-t}&=1-2t+t^2+t^3+\cdots \\
\frac{1-3t+3t^2}{(1-t)^2}&=1-t+0t^2+t^3+\cdots \\
\frac{1-3t+3t^2}{(1-t)^3}&=1+0t+0t^2+t^3+\cdots
\end{align*}
and then $\mbox{hdepth}(M)=3-3=0$.

To use the algorithm in \cite{B:BMU}, we set $Q(t)=1-3t+3t^2$. 
Then $\tilde{Q}(t)=1-3t+3t^2$. We can calculate that
 $\delta_3(\tilde{Q})=3$ and $e=\max\{3,3\}=3$. 
Hence, by the following computation: 
\begin{align*}
\frac{1-3t+3t^2}{1-t}&=1-2t+t^2+\cdots \\
\frac{1-3t+3t^2}{(1-t)^2}&=1-t+0t^2+\cdots \\
\frac{1-3t+3t^2}{(1-t)^3}&=1+0t+0t^2+\cdots
\end{align*}
we have $\mbox{hdepth}(M)=3-3=0$.

When applying the algorithm in \cite{B:Po}, one needs to decide 
the number of terms to be calculated in each formal power series, 
which may increase during the computation;
when applying the algorithm in \cite{B:BMU}, 
the number of terms to be calculated in each formal power series is 
given by the fixed number $e=\max\{\delta_d(\tilde{Q}),\deg(Q)+1\}$. 

By the previous examples and some other examples we have computed, 
we wonder if $\delta_d(\tilde{Q})\leq \deg(Q)+1$ 
holds for all graded ideals in the polynomial ring. This is a question in some sense 
similar to the one about the necessity of $m$ in Algorithm \ref{A:algorithm2}.
\end{remark}

\begin{example}\label{E:ex4}
Let $I=(x_1^2,\ldots,x_{10}^2)\subset S=K[x_1,\ldots,x_{10}]$. 
As explained after Remark \ref{R:remarkex1}, it is not easy to compute
$\mbox{hdepth}(I)$ by Algorithm \ref{A:algorithm1}.
However, since $I$ is a complete intersection monomial ideal, it is easy to see that 
\begin{align*}
&H_I(t)\\
&=\begin{matrix}\frac{\binom{10}{1}t^2-\binom{10}{2}t^4+\binom{10}{3}t^6-\binom{10}{4}t^8+\binom{10}{5}t^{10}-\binom{10}{6}t^{12}+\binom{10}{7}t^{14}-\binom{10}{8}t^{16}+\binom{10}{9}t^{18}-\binom{10}{10}t^{20}}{(1-t)^{10}}\end{matrix} \\
&=\begin{matrix}\frac{10t^2-45t^4+120t^6-210t^8+252t^{10}-210t^{12}+120t^{14}-45t^{16}+10t^{18}-t^{20}}{(1-t)^{10}}\end{matrix}.
\end{align*}

By \cite{B:Ci} or \cite{B:Sh1}, we know that the Stanley depth of $I$ is 
\[
\mbox{sdepth}(x_1^2,\ldots,x_{10}^2)=\mbox{sdepth}(x_1,\ldots,x_{10})=\lceil \frac{10}{2} \rceil=5.
\]
One may wonder if $\mbox{hdepth}(I)=\mbox{sdepth}(I)=5$. 
Next we will use Algorithm \ref{A:algorithm2} to compute $\mbox{hdepth}(I)$.

First, $q=0$ does not work because $b_4=-45<0$. 
If $q=1$, then by the following diagram,
\[
\begin{matrix}
\hline
10&0&-45&\cdots\\
10&-10&\ &\ \\
\hline
\ &10&-45&\cdots\\
\ &10&-10&\ \\
\hline
\ &\ &-35&\cdots 
\end{matrix}
\]
we have $b_4=-35<0$ which does not work. 
If $q=2$, then by the following diagram,
\[
\begin{matrix}
\hline
10&0&-45&0&\cdots\\
10&-20&10&\ &\ \\
\hline
\ &20&-35&0&\cdots\\
\ &20&-40&20&\  \\
\hline
\ &\ &5&-20&\cdots \\
\ &\ &5&-10&5 \\
\hline
\ &\ &\ &-10&\cdots
\end{matrix}
\]
we have $b_5=-10<0$ which does not work. 
If $q=3$, then by the following diagram,
\[
\begin{matrix}
\hline
10&0&-45&0&\cdots\\
10&-30&30&-10 &\ \\
\hline
\ &30&-75&10&\cdots\\
\ &30&-90&90&30  \\
\hline
\ &\ &15&-80&\cdots \\
\ &\ &15&-45&45\\
\hline
\ &\ &\ &-35&\cdots
\end{matrix}
\]
we have $b_5=-35<0$ which does not work.
If $q=4$ then we have that
\begin{align*}
&10t^2-45t^4+120t^6-210t^8+252t^{10}-210t^{12}+120t^{14}-45t^{16}+10t^{18}-t^{20}\\
&=(10t^2+40t^3+55t^4+115t^8+100t^9)(1-t)^4\\
&\ +(20t^5+120t^7+82t^{10}+106t^{11}+147t^{12}+105t^{13}+100t^{14})(1-t)^3\\
&\ +(132t^{15}+24t^{16})(1-t)^2+(16t^{17}+2t^{18}+2t^{19})(1-t)+t^{20}.
\end{align*} 
Hence, $q=4$ works. So we have that
\[
\mbox{hdepth}(x_1^2,\ldots,x_{10}^2)=10-4=6>\mbox{sdepth}(x_1^2,\ldots,x_{10}^2)=5.
\]
Note that from either algebraic or combinatorial point of view, 
$I$ is a nice and simple monomial ideal, but even in this case 
its Stanley depth and Hilbert depth are not equal to each other. 
\end{example}

From the above examples, we see that Algorithm \ref{A:algorithm1}
and Algorithm \ref{A:algorithm2} are useful tools for computing
the Hilbert depth of a graded ideal, especially when the ideal is a 
(squarefree) lex ideal, or when the Hilbert series of the ideal can 
be easily obtained. More importantly, without  Algorithm \ref{A:algorithm2}, 
it would be impossible for the author to find the counterexamples 
in the next section.

\section{Some Counterexamples}
In \cite{B:Sh2}, Shen made some conjectures related to the 
Hilbert depth of a (squarefree) lex ideal which is generated by monomials of the same degree.
In this section we will give some counterexamples to these conjectures, 
and Algorithm \ref{A:algorithm2} will be used in some of the computations. 

Conjecture 3.5 in \cite{B:Sh2} says that if $I$ is a stable ideal in $S=K[x_1,\ldots,x_n]$ 
generated by monomials of degree $d$ and $H_I(t)=a_dt^d+a_{d+1}t^{d+1}+\cdots$, 
then $\mbox{hdepth}(I)=\lfloor \frac{a_{d+1}}{a_d} \rfloor$. 

By Theorem \ref{T:Ul} part (1), we can see easily that for a graded ideal $I$,
if $H_I(t)=a_dt^d+a_{d+1}t^{d+1}+\cdots$ with $a_d\neq 0$, 
then $\mbox{hdepth}(I)\leq \lfloor \frac{a_{d+1}}{a_d} \rfloor$. 
The equal sign is obtained in the case of powers 
of maximal ideals $\mathfrak{m}^d$ (by \cite{B:BKU2}) 
and the case of squarefree Veronese ideals $I_{n,d}$ (by \cite{B:GLW}). 
But in general, the equal sign can not be obtained. 
For example, let $I=(x_1^2, \ldots, x_{10}^2)$ be as in 
Example \ref{E:ex4}, then $H_I(t)=10t^2+100t^3+\cdots$, 
but $\mbox{hdepth}(I)=6\neq 10$. How about stable ideals?
In the next counterexample, we will see that the identity 
may not hold even for a (squarefree) lex ideal generated by monomials of the same degree. 

\begin{counterexample}\label{C:cex1}
Let $L=(x_1^2, x_1x_2, \ldots, x_1x_{100}, x_2^2, x_2x_3, \ldots, x_2x_{13})\subset S=K[x_1, x_2, \ldots, x_{100}]$. 
Then $L$ is a lex ideal in $S$ generated by some monomials of degree $2$.
Similar to the analysis in Example \ref{E:ex2}, by the 
Eiliahou-Kervaire resolution of $L$, we have that
\begin{align*}
&H_L(t)\\
&=\frac{t^2\left( 1+(1-t)+\cdots+(1-t)^{99}+(1-t)+(1-t)^2+\cdots+(1-t)^{12}\right)}{(1-t)^{100}}\\
&=\frac{t^2}{(1-t)^{100}}\left(\frac{1-(1-t)^{100}}{1-(1-t)}+\frac{1-(1-t)^{13}}{1-(1-t)}-1\right)\\
&=\frac{t^2}{(1-t)^{100}}\left(112-5028t+161986t^2-3921940t^3+\cdots-t^{99} \right)\\
&=\frac{112t^2-5028t^3+161986t^4-3921940t^5+\cdots -t^{101}}{(1-t)^{100}}\\
&=112t^2+6172t^3+\cdots.
\end{align*}
Assume that $\mbox{hdepth}(L)=\lfloor \frac{6172}{112} \rfloor =55$, 
then by the analysis before Corollary \ref{C:key2} we conclude that 
$q=100-55=45$ would work in step 3 of Algorithm \ref{A:algorithm2}. 
However, the following diagram
\[
\begin{matrix}
\hline
112&-5028&161986&-3921940&\cdots\\
112&-112\times 45 &112\times \binom{45}{2} &-112\times \binom{45}{3} &\cdots \\
\hline
\ &12&51106&-2332660&\cdots\\
\ &12&-12\times 45&12\times \binom{45}{2}&\cdots  \\
\hline
\ &\ &51646&-2344540&\cdots \\
\ &\ &51646&-51646\times 45&\cdots\\
\hline
\ &\ &\ &-20470&\cdots
\end{matrix}
\]
tells us that $b_5=-20470<0$, so that $q=45$ does not work, which is a contradiction. 
Therefore, $\mbox{hdepth}(L)\neq \lfloor \frac{6172}{112} \rfloor $ and $\mbox{hdepth}(L)\leq 54$.

Note that $L^\sigma=(x_1x_2, x_1x_3, \ldots, x_1x_{101}, x_2x_3, x_2x_4, \ldots, x_2x_{14})$
is a squarefree lex ideal in $S'=K[x_1, \ldots, x_{101}]$ 
generated by some squarefree monomials of degree $2$, and 
\begin{align*}
H_{L^\sigma}(t)&=H_L(t)\frac{1}{1-t}\\
               &=(112t^2+6172t^3+\cdots)(1+t+\cdots)\\
               &=112t^2+6284t^3+\cdots.
\end{align*}
So, $\mbox{hdepth}(L^\sigma)=\mbox{hdepth}(L)+1\leq 54+1=55$ 
and then $\mbox{hdepth}(L^\sigma)\neq \lfloor \frac{6284}{112} \rfloor =56$.
\end{counterexample}

Conjecture 5.5 in \cite{B:Sh2} says that if $L_1$ and $L_2$ are lex ideals in $S$ 
both generated by monomials of degree $d$ and $L_1\subset L_2$, 
then $\mbox{hdepth}(L_1)\geq \mbox{hdepth}(L_2)$. 
The following is a counterexample to this conjecture.

\begin{counterexample}\label{C:cex2}
In the polynomial ring $S=K[x_1,x_2,\ldots, x_{10}]$, let
\begin{align*}
L_1&=(x_1^2, x_1x_2, x_1x_3, x_1x_4,x_1x_5, x_1x_6, x_1x_7,x_1x_8, x_1x_9, x_1x_{10}, x_2^2)\\
L_2&=(x_1^2, x_1x_2, x_1x_3, x_1x_4,x_1x_5, x_1x_6, x_1x_7,x_1x_8, x_1x_9, x_1x_{10}, x_2^2, x_2x_3)
\end{align*}
be two lex ideals generated by monomials of degree $2$ such that $L_1\subset L_2$.
Similar to the analysis in Example \ref{E:ex2}, by the 
Eiliahou-Kervaire resolution, we have that
\begin{align*}
&H_{L_1}(t)\\
&=\frac{t^2\left( 1+(1-t)+\cdots+(1-t)^{9}+(1-t)\right)}{(1-t)^{10}}\\
&=\frac{t^2}{(1-t)^{10}}\left(\frac{1-(1-t)^{10}}{1-(1-t)}+(1-t)\right)\\
&=\frac{11t^2-46t^3+120t^4-210t^5+252t^6-210t^7+120t^8-45t^9+10t^{10}-t^{11} }{(1-t)^{10}},\\
&H_{L_2}(t)\\
&=\frac{t^2\left( 1+(1-t)+\cdots+(1-t)^{9}+(1-t)+(1-t)^2\right)}{(1-t)^{10}}\\
&=\frac{t^2}{(1-t)^{10}}\left(\frac{1-(1-t)^{10}}{1-(1-t)}+(1-t)+(1-t)^2\right)\\
&=\frac{12t^2-48t^3+121t^4-210t^5+252t^6-210t^7+120t^8-45t^9+10t^{10}-t^{11} }{(1-t)^{10}}.
\end{align*}

Apply Algorithm \ref{A:algorithm2} to $L_1$. It is easy to see 
that if $q\leq 4$ then $b_3<0$, so that $q\leq 4$ does not work. 
On the other hand, we have that
\begin{align*}
&11t^2-46t^3+120t^4-210t^5+252t^6-210t^7+120t^8-45t^9+10t^{10}-t^{11}\\
&=(11t^2+9t^3)(1-t)^5+(55t^4+30t^5)(1-t)^4+77t^6(1-t)^3\\
&\ +(27t^7+17t^8)(1-t)^2+(9t^9+2t^{10})(1-t)+t^{11},
\end{align*}
which implies that $q=5$ works. Therefore, $\mbox{hdepth}(L_1)=10-5=5$.

Apply Algorithm \ref{A:algorithm2} to $L_2$. It is easy to see 
that if $q\leq 3$ then $b_3<0$, so that $q\leq 3$ does not work. 
On the other hand, we have that
\begin{align*}
&12t^2-48t^3+121t^4-210t^5+252t^6-210t^7+120t^8-45t^9+10t^{10}-t^{11}\\
&=(12t^2+49t^4+34t^5)(1-t)^4+82t^6(1-t)^3\\
&\ +(28t^7+17t^8)(1-t)^2+(9t^9+2t^{10})(1-t)+t^{11},
\end{align*}
which implies that $q=4$ works. Therefore, $\mbox{hdepth}(L_2)=10-4=6$.

So we have found lex ideals $L_1$ and $L_2$ of degree $2$ 
such that $L_1\subset L_2$ but $\mbox{hdepth}(L_1)<\mbox{hdepth}(L_2)$.

Note that $L_1^\sigma$ and $L_2^\sigma$ are squarefree lex ideals 
in $S'=K[x_1,\ldots, x_{11}]$. 
Both are generated by some monomials of degree $2$, and $L_1^\sigma\subset L_2^\sigma$. 
Since $\mbox{hdepth}(L_1^\sigma)=\mbox{hdepth}(L_1)+1=6$ and $\mbox{hdepth}(L_2^\sigma)=\mbox{hdepth}(L_2)+1=7$,
it follows that $\mbox{hdepth}(L_1^\sigma)<\mbox{hdepth}(L_2^\sigma)$.
\end{counterexample}

Conjecture 4.3 in \cite{B:Sh2} says that 
if $I$ is a strongly stable monomial ideal in $S=K[x_1,\ldots, x_n]$ generated by monomials of the same degree 
and $I^\sigma$ is a squarefree strongly stable monomial ideal in $S'=K[x_1,\ldots, x_m]$,
then we have that 
\begin{align*}
\mbox{sdepth}(I)&=\mbox{hdepth}(I), \\
\mbox{sdepth}(I^\sigma)&=\mbox{hdepth}(I^\sigma),\\
\mbox{sdepth}(I)&=\mbox{sdepth}(I^\sigma)-(m-n). 
\end{align*}
In the following two counterexamples, we will see that the first two identities do not hold.

\begin{counterexample}\label{C:cex3}
Let $L_2=(x_1^2, x_1x_2, \ldots, x_1x_{10}, x_2^2, x_2x_3)$ 
be the lex ideal in $S=K[x_1, \ldots, x_{10}]$. 
From Counterexample \ref{C:cex2},We already know that $\mbox{hdepth}(L_2)=6$.

To compute $\mbox{sdepth}(L_2)$, we use the method of \cite{B:HVZ} and let $g=(2,2,1,\ldots,1)$. 
Instead of thinking of $P_{L_2}^g$ as a subset of $\mathbb{N}^{10}$, 
we will equivalently view $P_{L_2}^g$ as the set of all monomials 
which divides $x_1^2x_2^2x_3\cdots x_{10}$ and can be divided by one of the generators of $L_2$. 

All the degree $2$ monomials in $P_{L_2}^g$ are
\[
x_1^2, x_1x_2, x_1x_3, x_1x_4, x_1x_5, x_1x_6, x_1x_7, x_1x_8, x_1x_9, x_1x_{10}, x_2^2, x_2x_3.
\]
All the degree $3$ monomials in $P_{L_2}^g$ are
\[
\begin{matrix}
x_1^2x_2&x_1^2x_3&x_1^2x_4&x_1^2x_5&x_1^2x_6&x_1^2x_7&x_1^2x_8&x_1^2x_9\\
x_1^2x_{10}&x_1x_2^2&x_1x_2x_3&x_1x_2x_4&x_1x_2x_5&x_1x_2x_6&x_1x_2x_7&x_1x_2x_8\\
x_1x_2x_9&x_1x_2x_{10}&x_1x_3x_4&x_1x_3x_5&x_1x_3x_6&x_1x_3x_7&x_1x_3x_8&x_1x_3x_9\\
x_1x_3x_{10}&x_1x_4x_5&x_1x_4x_6&x_1x_4x_7&x_1x_4x_8&x_1x_4x_9&x_1x_4x_{10}&x_1x_5x_6\\
x_1x_5x_7&x_1x_5x_8&x_1x_5x_9&x_1x_5x_{10}&x_1x_6x_7&x_1x_6x_8&x_1x_6x_9&x_1x_6x_{10}\\
x_1x_7x_8&x_1x_7x_9&x_1x_7x_{10}&x_1x_8x_9&x_1x_8x_{10}&x_1x_9x_{10}&\ &\ &\ \\
x_2^2x_3&x_2^2x_4&x_2^2x_5&x_2^2x_6&x_2^2x_7&x_2^2x_8&x_2^2x_9&x_2^2x_{10}\\
x_2x_3x_4&x_2x_3x_5&x_2x_3x_6&x_2x_3x_7&x_2x_3x_8&x_2x_3x_9&x_2x_3x_{10}&\ .
\end{matrix}
\]
There are $12$ degree-$2$ monomials and $61$ degree-$3$ monomials in $P_{L_2}^g$.

Assume that $\mbox{sdepth}(L_2)=6$. Then there exists a partition $\mathcal{P}$
of $P_{L_2}^g$ such that the Stanley decomposition induced by $\mathcal{P}$ 
has Stanley depth $6$. Suppose that in the partition $\mathcal{P}$ we have 
intervals 
$[x_1^2, x_1^2m_1]$, $[x_1x_2, x_1x_2m_2]$, $[x_1x_3, x_1x_3m_3]$, 
$\ldots$, $[x_1x_{10}, x_1x_{10}m_{10}]$. Next we will show that each of these $10$
intervals has at least $5$ degree-$3$ monomials. 
\begin{itemize}
\item[(1)] $[x_1^2, x_1^2m_1]$: since $\mbox{sdepth}(L_2)=6$, 
           there exist $2\leq i_1< \cdots < i_5$ and $m_1'$
           such that $m_1=x_{i_1}\cdots x_{i_5}m_1'$. 
           Thus, $x_1^2x_{i_1}, \ldots, x_1^2x_{i_5}$ are $5$ degree-$3$ 
           monomials in this interval. 
\item[(2)] $[x_1x_2, x_1x_2m_2]$: if $x_1, x_2$ both can not divide $m_2$, then 
           there exist $3\leq i_1 < \cdots < i_6$ and $m_2'$
           such that $m_2=x_{i_1}\cdots x_{i_6}m_2'$, so that 
           $x_1x_2x_{i_1}, \ldots, x_1x_2x_{i_6}$ are 
           $6$ degree-$3$ monomials in this interval; 
           if $x_1$ divides $m_2$ and $x_2$ can not divide $m_2$, then 
           there exist $3\leq i_1 < \cdots < i_5$ and $m_2'$
           such that $m_2=x_1x_{i_1}\cdots x_{i_5}m_2'$, so that 
           $x_1^2x_2,x_1x_2x_{i_1}, \ldots, x_1x_2x_{i_5}$ are 
           $6$ degree-$3$ monomials in this interval;  
           if $x_1$ can not divide $m_2$ and $x_2$ divides $m_2$, then 
           there exist $3\leq i_1 < \cdots < i_5$ and $m_2'$
           such that $m_2=x_2x_{i_1}\cdots x_{i_5}m_2'$, so that 
           $x_1x_2^2,x_1x_2x_{i_1}, \ldots, x_1x_2x_{i_5}$ are 
           $6$ degree-$3$ monomials in this interval;  
           if $x_1, x_2$ both divides $m_2$, then 
           there exist $3\leq i_1 < \cdots < i_4$ and $m_2'$
           such that $m_2=x_1x_2x_{i_1}\cdots x_{i_4}m_2'$, so that 
           $x_1^2x_2,x_1x_2^2,x_1x_2x_{i_1}, \ldots, x_1x_2x_{i_4}$ are 
           $6$ degree-$3$ monomials in this interval;
\item[(3)] $[x_1x_j, x_1x_jm_j]$ with $3\leq j \leq 10$: 
           we will look at $[x_1x_3, x_1x_3m_3]$ and the other cases are similar.
           If $x_1,x_2^2$ both can not divide $m_3$, then 
           there exist $4\leq i_1 < \cdots < i_5$ and $m_3'$
           such that $m_3=x_{i_1}\cdots x_{i_5}m_3'$, so that 
           $x_1x_3x_{i_1}, \ldots, x_1x_3x_{i_5}$ are 
           $5$ degree-$3$ monomials in this interval; 
           if $x_1$ divides $m_3$ and $x_2^2$ can not divide $m_3$, then 
           there exist $4\leq i_1 < \cdots < i_4$ and $m_3'$
           such that $m_3=x_1x_{i_1}\cdots x_{i_4}m_3'$, so that 
           $x_1^2x_3,x_1x_3x_{i_1}, \ldots, x_1x_3x_{i_4}$ are 
           $5$ degree-$3$ monomials in this interval;  
           if $x_1$ can not divide $m_3$ and $x_2^2$ divides $m_3$, then 
           there exist $4\leq i_1 < \cdots < i_4$ and $m_3'$
           such that $m_3=x_2^2x_{i_1}\cdots x_{i_4}m_3'$, so that 
           $x_1x_2^2,x_1x_3x_{i_1}, \ldots, x_1x_3x_{i_4}$ are 
           $5$ degree-$3$ monomials in this interval; 
           if $x_1, x_2^2$ both divides $m_3$, then 
           there exist $4\leq i_1 < i_2 < i_3$ and $m_3'$
           such that $m_3=x_1x_2^2x_{i_1}x_{i_2} x_{i_3}m_3'$, so that 
           $x_1^2x_2,x_1x_2^2,x_1x_3x_{i_1}, x_1x_3x_{i_2}, x_1x_3x_{i_3}$ are 
           $5$ degree-$3$ monomials in this interval;          
\end{itemize}
Therefore, all these $10$ disjoint intervals will have at leat $50$
degree-$3$ monomials in total. However, the last $15$ monomials of degree $3$ 
in $P_{L_2}^g$ are $x_2^2x_3, \ldots, x_2x_3x_{10}$, which can not belong to 
these $10$ intervals, so that there are only $46$ degree-$3$ monomials left
for these $10$ intervals. $46<50$, and we have a contradiction. 
So, the assumption $\mbox{sdepth}(L_2)=6$ is not true, 
and then $\mbox{sdepth}(L_2)\leq 5$.

Let $L_3=(x_1^2, x_1x_2, \ldots, x_1x_{10})\subset S$ then $L_2=L_3+(x_2^2, x_2x_3)$.
It is esay to see that 
$\mbox{sdepth}(L_3)=\mbox{sdepth}(x_1, x_2, \ldots, x_{10})=\lceil \frac{10}{2} \rceil=5$.
Hence, $L_3$ has a Stanley decomposition $\mathcal{D}_1$ such that $\mbox{sdepth}(\mathcal{D}_1)=5$.
Let 
\[
\mathcal{D}_2=\mathcal{D}_1\bigoplus x_2^2K[x_2, x_3, \ldots, x_{10}] \bigoplus x_2x_3K[x_3, x_4, \ldots, x_{10}].
\]
We can check that $\mathcal{D}_2$ is a Stanley decomposition of $L_2$,
so that $\mbox{sdepth}(L_2)\geq \mbox{sdepth}(\mathcal{D}_2)=5$. 
Therefore, $\mbox{sdepth}(L_2)=5<\mbox{hdepth}(L_2)=6$.
\end{counterexample}

\begin{counterexample}\label{C:cex4}
Consider $L_2^\sigma=(x_1x_2, x_1x_3, \ldots, x_1x_{11}, x_2x_3, x_2x_4)$. 
It is a squarefree lex ideal in $S'=K[x_1, \ldots, x_{11}]$. 
From Counterexample \ref{C:cex2},We already know that $\mbox{hdepth}(L_2^\sigma)=7$.

To compute $\mbox{sdepth}(L_2^\sigma)$, we use the method of \cite{B:HVZ} and let $g'=(1,1,\ldots,1)$. 
Instead of thinking of $P_{L_2^\sigma}^{g'}$ as a subset of $\mathbb{N}^{11}$, 
we will equivalently view $P_{L_2^\sigma}^{g'}$ as the set of all monomials 
which divides $x_1x_2\cdots x_{11}$ and can be divided by one of the generators of $L_2^\sigma$. 

All the degree $2$ monomials in $P_{L_2^\sigma}^{g'}$ are
\[
x_1x_2,x_1x_3,x_1x_4,x_1x_5,x_1x_6,x_1x_7,x_1x_8,x_1x_9,x_1x_{10},x_1x_{11}, x_2x_3 , x_2x_4.
\]
All the degree $3$ monomials in $P_{L_2^\sigma}^{g'}$ are
\[
\begin{matrix}
x_1x_2x_3&x_1x_2x_4&x_1x_2x_5&x_1x_2x_6&x_1x_2x_7&x_1x_2x_8&x_1x_2x_9&x_1x_2x_{10}\\
x_1x_2x_{11}&x_1x_3x_4&x_1x_3x_5&x_1x_3x_6&x_1x_3x_7&x_1x_3x_8&x_1x_3x_9&x_1x_3x_{10}\\
x_1x_3x_{11}&x_1x_4x_5&x_1x_4x_6&x_1x_4x_7&x_1x_4x_8&x_1x_4x_9&x_1x_4x_{10}&x_1x_4x_{11}\\
x_1x_5x_6&x_1x_5x_7&x_1x_5x_8&x_1x_5x_9&x_1x_5x_{10}&x_1x_5x_{11}&x_1x_6x_7&x_1x_6x_8\\
x_1x_6x_9&x_1x_6x_{10}&x_1x_6x_{11}&x_1x_7x_8&x_1x_7x_9&x_1x_7x_{10}&x_1x_7x_{11}&x_1x_8x_9\\
x_1x_8x_{10}&x_1x_8x_{11}&x_1x_9x_{10}&x_1x_9x_{11}&x_1x_{10}x_{11} &\ &\ &\ \\
x_2x_3x_4&x_2x_3x_5&x_2x_3x_6&x_2x_3x_7&x_2x_3x_8&x_2x_3x_9&x_2x_3x_{10}&x_2x_3x_{11}\\
x_2x_4x_5&x_2x_4x_6&x_2x_4x_7&x_2x_4x_8&x_2x_4x_9&x_2x_4x_{10}&x_2x_4x_{11} &\ .
\end{matrix}
\]
There are $12$ degree-$2$ monomials and $60$ degree-$3$ monomials in $P_{L_2^\sigma}^{g'}$.

Assume that $\mbox{sdepth}(L_2^\sigma)=7$. Then there exists a partition $\mathcal{P}$
of $P_{L_2^\sigma}^{g'}$ such that the Stanley decomposition induced by $\mathcal{P}$ 
has Stanley depth $7$. Suppose that in the partition $\mathcal{P}$ we have 
intervals $[x_1x_2, x_1x_2m_2]$, $[x_1x_3, x_1x_3m_3]$, 
$\ldots$, $[x_1x_{11}, x_1x_{11}m_{11}]$. It is easy to see that each of these $10$
intervals has at least $5$ degree-$3$ monomials. 
Indeed, for the interval $[x_1x_2,x_1x_2m_2]$, 
since $\mbox{sdepth}(L_2^\sigma)=7$, it follows that there exist
$3\leq i_1<\ldots<i_5$ and $m_2'$ such that $m_2=x_{i_1}\cdots x_{i_5}m_2'$,
so that $x_1x_2x_{i_1}, \ldots, x_1x_2x_{i_5}$ are 
$5$ degree-$3$ monomials in this interval; 
for the other $9$ intervals, the argument is similar. 

Therefore, all these $10$ disjoint intervals will have at leat $50$
degree-$3$ monomials in total. However, the last $15$ monomials of degree $3$ 
in $P_{L_2^\sigma}^{g'}$ are $x_2x_3x_4, \ldots, x_2x_4x_{11}$, which can not belong to 
these $10$ intervals, so that there are only $45$ degree-$3$ monomials left
for these $10$ intervals. $45<50$, and we have a contradiction. 
So, the assumption $\mbox{sdepth}(L_2^\sigma)=7$ is not true, 
and then $\mbox{sdepth}(L_2^\sigma)\leq 6$.

Let $L_4=( x_1x_2, x_1x_3, \ldots, x_1x_{11})\in S'$ then $L_2^\sigma=L_4+(x_2x_3, x_2x_4)$.
It is esay to see that 
$\mbox{sdepth}(L_4)=\mbox{sdepth}(x_2, x_3, \ldots, x_{11})=11-\lfloor \frac{10}{2} \rfloor=6$,
where the second identity is by the result of \cite{B:Sh1}.
Hence, $L_4$ has a Stanley decomposition $\mathcal{D}_3$ such that $\mbox{sdepth}(\mathcal{D}_3)=6$.
Let 
\[
\mathcal{D}_4=\mathcal{D}_3\bigoplus x_2x_3K[x_2, x_3, x_4, \ldots, x_{11}] \bigoplus x_2x_4K[x_2, x_4, x_5\ldots, x_{11}].
\]
We can check that $\mathcal{D}_4$ is a Stanley decomposition of $L_2^\sigma$,
so that $\mbox{sdepth}(L_2^\sigma)\geq \mbox{sdepth}(\mathcal{D}_4)=6$. 
Therefore, $\mbox{sdepth}(L_2^\sigma)=6<\mbox{hdepth}(L_2^\sigma)=7$.
\end{counterexample}

\begin{remark}
From the above two counterexamples, we see that $\mbox{sdepth}(L_2)$ and $\mbox{sdepth}(L_2^\sigma)$
satisfies the identity $\mbox{sdepth}(I)=\mbox{sdepth}(I^\sigma)-(m-n)$, where $I$ is a strongly stable 
monomial ideal generated by monomials of the same degree. 
As mentioned in \cite{B:Sh2}, this identity was first suggested by Herzog. 
If we can prove this identity for lex ideals 
generated by monomials of the same degree, 
then we can reduce the study of $\mbox{sdepth}(\mathfrak{m}^d)$ to 
the study of $\mbox{sdepth}(I_{n+d-1,d})$, which would be a big progress 
in the study of Stanley depth.

From Counterexample \ref{C:cex4}, the author feels that 
$\mbox{sdepth}(I_{n,d})=d+\lfloor \frac{n-d}{d+1} \rfloor$ probably
does not hold for all $n\geq 5d+4$. The simplest case is $I_{14,2}$. 
We know $\mbox{hdepth}(I_{14,2})=2+\lfloor \frac{12}{3} \rfloor=6$,
and we can use Algorithm \ref{A:squarefree} to find a Hilbert decomposition 
of $I_{14,2}$ whose Hilbert depth is  $6$. 
It would be interesting to figure out if there exists a Stanley decomposition 
of  $I_{14,2}$ whose Stanley depth is  $6$.
\end{remark}


\begin{thebibliography}{BHTT}
      \bibitem[AHH]{B:AHH}
      A. Aramova, J. Herzog, T. Hibi: 
      Shifting operations and graded Betti numbers, 
      \emph{J. Algebraic Combin. } 12 (2000),  207--222.
      \bibitem[BHK]{B:BHK}
      C. Bir\'{o}, D.M. Howard, M.T. Keller, W.T. Trotter, S.J. Young: 
      Interval partitions and Stanley depth, 
      \emph{J. Combin. Theory Ser. A} 117 (2010),  475--482.
      \bibitem[BKU1]{B:BKU1}
      W. Bruns, C. Krattenthaler, J. Uliczka: 
      Stanley decompositions and Hilbert depth in the Koszul complex, 
      \emph{J. Commut. Algebra} 2 (2010),  327--357.  
      \bibitem[BKU2]{B:BKU2}
      W. Bruns, C. Krattenthaler, J. Uliczka: 
      Hilbert depth of powers of the maximal ideal, 
      \emph{Commutative algebra and its connections to geometry,}  (2011),  pp. 1--12.  
      \bibitem[BMU]{B:BMU}
      W. Bruns, J. J. Moyano-Fern\'{a}ndez, J. Uliczka: 
      Hilbert regularity of $\mathbb{Z}$-graded modules over polynomial rings, 
      Preprint, arXiv:1308.2917v1 (2013).      
      \bibitem[Ci]{B:Ci}
      M. Cimpoeas: 
      Stanley depth of complete intersection monomial ideals, 
      \emph{Bull. Math. Soc. Sci. Math. Roumanie (N.S.)} 51(99) (2008),  205--211. 
      \bibitem[EK]{B:EK}
      S. Eliahou, M. Kervaire: 
      Minimal resolutions of some monomial ideals,
      \emph{J. Algebra} 129, (1990), 1--25.
      \bibitem[GLW]{B:GLW}
      M. Ge, J. Lin, Y. Wang: 
      Hilbert series and Hilbert depth of squarefree Veronese ideals, 
      \emph{J. Algebra} 344 (2011),  260--267.        
      \bibitem[HVZ]{B:HVZ}
      J. Herzog, M. Vladoiu, X. Zheng: 
      How to compute the Stanley depth of a monomial ideal, 
      \emph{J. Algebra} 322 (2009),  3151--3169.                  
      \bibitem[KSS]{B:KSS}
      M.T. Keller, Y.-H. Shen, N. Streib, S.J. Young: 
      On the Stanley depth of squarefree Veronese ideals, 
      \emph{J. Algebraic Combin. } 33 (2011),  313--324. 
      \bibitem[Ma]{B:Ma}
      F. Macaulay: 
      Some properties of enumeration in the theory of modular systems, 
      \emph{Proc. London Math. Soc.} 26 (1927),  531--555. 
      \bibitem[Po]{B:Po}
      A. Popescu: 
      An algorithm to compute the Hilbert depth, 
      Preprint, arXiv:1307.6084v2 (2013).          
      \bibitem[Sh1]{B:Sh1}
      Y.-H. Shen: 
      Stanley depth of complete intersection monomial ideals and upper-discrete partitions, 
      \emph{J. Algebra} 321 (2009),  1285--1292.                   
      \bibitem[Sh2]{B:Sh2}
      Y.-H. Shen: 
      Lexsegment ideals of Hilbert depth 1, 
      Preprint, arXiv:1208.1822 (2012).  
      \bibitem[St]{B:St}
      R.P. Stanley: 
      Linear Diophantine equations and local cohomology, 
      \emph{Invent. Math.} 68 (1982),  175--193.   
      \bibitem[Ul]{B:Ul}
      J. Uliczka: 
      Remarks on Hilbert series of graded modules over polynomial rings, 
      \emph{Manuscripta Math.} 132 (2010),  159--168.               
      
\end{thebibliography}
\end{document}